\documentclass[a4paper, reqno]{amsart}
\usepackage{amsmath, amssymb, eucal, amscd, amstext, enumerate, mathrsfs, yfonts, amsfonts, comment, color}

\topmargin -0.1in
\textwidth 6.25in
\textheight 8.75in
\oddsidemargin -0.2in
\evensidemargin -0.2in

\newtheorem{thm}{Theorem}
\newtheorem{lem}[thm]{Lemma}

\newtheorem{prop}[thm]{Proposition}
\newtheorem{cor}[thm]{Corollary}

\theoremstyle{definition}
\newtheorem{defn}[thm]{Definition}

\theoremstyle{remark}

\newtheorem{rem}[thm]{Remark}

\newtheorem{eg}[thm]{Example}

\numberwithin{equation}{section}

\newcommand{\ti}{\tilde}

\newcommand{\smnoind}{\smallskip\noindent}

\newcommand{\RP}{\mathbb{R}^+}

\newcommand{\BN}{\mathbb{N}}
\newcommand{\BR}{\mathbb{R}}

\newcommand{\CS}{\mathcal{S}}
\newcommand{\CA}{\mathcal{A}}
\newcommand{\CF}{\mathcal{F}}

\newcommand{\CP}{\mathcal{P}}
\newcommand{\CC}{\mathcal{C}}

\newcommand{\D}{\mathcal{D}}
\newcommand{\CE}{\mathcal{E}}

\newcommand{\CT}{\mathcal{T}}
\newcommand{\CI}{\mathcal{I}}
\newcommand{\CJ}{\mathcal{J}}
\newcommand{\CU}{\mathcal{U}}

\newcommand{\CB}{\mathcal{B}}

\newcommand{\KI}{\mathfrak{I}}

\newcommand{\FD}{\mathbf{D}}
\newcommand{\fd}{\mathbf{d}}

\begin{document}

\title[]{Coarse metric and uniform metric}

\author{Chi-Keung Ng}

\address[Chi-Keung Ng]{Chern Institute of Mathematics and LPMC, Nankai University, Tianjin 300071, China.}
\email{ckng@nankai.edu.cn}

\date{\today}

\keywords{pseudo metric spaces, coarse spaces, uniform spaces}

\subjclass[2010]{Primary: 51K05, 54E15, 54E35}

\begin{abstract}
We introduce the notion of coarse metric. 
Every coarse metric induces a coarse structure on the underlying set. 
Conversely, we observe that all coarse spaces come from a particular type of coarse metric in a unique way. 
In the case when the coarse structure $\mathcal{E}$ on a set $X$ is defined by a coarse metric that takes values in a meet-complete totally ordered set, we define the associated Hausdorff coarse metric on the set $\mathcal{P}_0(X)$ of non-empty subsets of $X$ and show that it induces the Hausdorff coarse structure on $\mathcal{P}_0(X)$.

On the other hand, we define the notion of pseudo uniform metric. 
Each pseudo uniform metric induces a uniform structure on the underlying space. 
In the reverse direction, we show that  a uniform structure $\mathcal{U}$ on a set $X$ is induced by a map $d$ from $X\times X$ to a partially ordered set (with no requirement on $d$) if and only if $\mathcal{U}$ admits a base $\mathcal{B}$ such that $\mathcal{B}\cup \{\bigcap \mathcal{U}\}$ is closed under arbitrary intersections.
In this case, $\mathcal{U}$ is actually defined by a pseudo uniform metric. 
We also show that a uniform structures $\mathcal{U}$ comes from a pseudo uniform metric that takes values in a totally ordered set if and only if $\mathcal{U}$ admits a totally ordered base. 

Finally, a valuation ring will produce an example of a coarse and pseudo uniform metric that take values in a totally ordered set.
\end{abstract}

\maketitle

\section{Introduction and Notations}

The notion of coarse spaces was first introduced in \cite{HPR}.
It can be regarded as an abstract framework for the study of large scale properties of metric spaces. 
A throughout account for coarse spaces can be found in \cite{Roe} (see also \cite{BLS, CWY, DZ, HR, HRY, MY, NR, Wr03, Wr11, Yam, Zhang} for some information on coarse structure). 
On the other hand, coarse structure can be deemed as an opposite to uniform structure (see e.g. \cite{Isb} or \cite{Page}). 
Obviously, pseudo metric is a common source of examples for both of them. 
In fact, uniform spaces are generalizaions of pseudo metric spaces and coarse structures were first studied for metric spaces (see \cite{HPR}). 
It is known that a coarse structure (respectively, a uniform structure) is defined by a pseudo metric if and only if it has a countable base (see e.g. \cite[Theorem 2.55]{Roe} for the case of coarse structures and \cite[Corollary I.4.4]{Page} for the case of uniform structures). 
The aim of this article is to introduce two general notions of metric, and to study coarse structures and uniform structures associated respectively with them. 

\medskip

For a partially ordered set $\CI$ with a smallest element $0_\CI$, we denote by $\CI_\infty$ the extension of $\CI$ by adjoining a new element $\infty$ that is larger than all elements in $\CI$. 
The most general form of ``metric'' on a set $X$ is simply a map 
$$\fd:X\times X\to \CI_\infty.$$
If the map $\fd$ satisfies $\fd(x,y) = \fd(y,x)$ as well as $\fd(x,x)  = 0_\CI$ ($x,y\in X$), and it also fulfills certain growth condition as in Definition \ref{defn:coarse-met}(a) (respectively, descent condition as in Definition \ref{defn:unif-met}(b)), then $\fd$ is called a  coarse metric (respectively, uniform metric). 	
It will be obvious that a coarse metric (respectively, uniform metric) will induce a coarse structure (respectively, uniform structure) on the underlying space. 
A natural question is how to characterize those coarse structures and uniform structures coming from such generalized notions of metric. 

\medskip

In fact, it is not hard to check that any coarse space actually comes from a coarse metric. 
Furthermore, there is a bijective correspondence between the collection of coarse structures on a set $X$ and the collection of coarse metrics on $X$ that are ``saturated''  (Theorem \ref{thm:coarse metric}). 
Through the correspondence of coarse structures and coarse metrics, one can rephrase some terminologies in coarse spaces back in metric terms, which makes them easier to understand, and hopefully easier to manipulate.
For example, a coarse space is coarsely connected if and only if one (and hence all) of its defining coarse metrics does not take the value $\infty$.
A list of other translations can be found in Propositions \ref{prop:coarse-prop-met} and \ref{prop:bdd-geom}. 
Philosophically, the above correspondence tells us that by studying the coarse structure of a metric space, one actually ``forgets'' the triangle inequality and ``remembers'' only the growth condition (as in Definition \ref{defn:coarse-met}(a)). 

\medskip

On the other hand, we will show that a coarse structure has a totally ordered base if and only if it is defined by a coarse metric taking values in a totally ordered set (Corollary \ref{cor:Haus-met}(a)). 
Furthermore, we investigate the relation between the Hausdorff coarse structure on the set of non-empty subsets of a coarse space (Definition \ref{defn:Hausd-coar-str}) and the Hausdorff coarse metric on the same collection of subsets induced by the coarse metric defining the original coarse space (Proposition \ref{prop:Hausd-coar-met}). 

\medskip

In the case of uniform structures, the correspondence is not as perfect. 
We will show in Section 3 that for a uniform structure $\CU$, there is a map $\fd:X\times X\to \CI_\infty$  (without any further requirement on $\fd$) such that 
$$\big\{\FD_\alpha: \alpha\in \CI\setminus \{0_\CI\} \big\}$$
forms a base for $\CU$, where
\begin{equation}\label{eqt:defn-D-alpha}
\FD_\alpha:= \{(x,y): \fd(x,y)\leq \alpha \},
\end{equation}
if and only if $\CU$ admits a base $\CB$ with $\CB \cup \{\bigcap \CU \}$ being closed under arbitrary intersections.  
In this case, $\CU$ is actually defined by a pseduo uniform metric (Theorem \ref{thm:unif-str=>met}). 
In particular, if $\CU$ admits a totally ordered base, then it is defined by a pseduo uniform metric (Corollary \ref{cor:unif-tot-ord}).

\medskip

In Section 4, we give a mild condition, under which a coarse metric will become a pseudo uniform metric (Proposition \ref{prop:coarse>unif}). 
We will close this short article by giving an example of a coarse and pseudo uniform metric coming from a valuation ring (Example \ref{eg:val-ring}).

\medskip

In the remainder of this section, let us set some notation. 
Suppose that $\CI$ and $\CI_\infty$ are as in the second paragraph of this Introduction.
\begin{itemize}
	\item The smallest element in $\CI$, if it exists, is called the \emph{zero} of $\CI$ and will always be denoted by $0_\CI$. 
	
	\item $\CI$ is called an \emph{upward directed set} if for any $\alpha, \beta\in \CI$, there exists $\gamma \in \CI$ with $\alpha\leq \gamma$ and $\beta\leq \gamma$. 
	
	\item If $\CJ$ is another partially ordered set, and $\Lambda: \CI \to \CJ$ is a map, then 
	\begin{equation}\label{eqt:def-Lambda-infty}
	\text{we set $\Lambda_\infty: \CI_\infty\to \CJ_\infty$ to be the extension of $\Lambda$ such that $\Lambda_\infty(\infty) = \infty$.} 
	\end{equation}
\end{itemize}

\medskip

A partially ordered set $\CI$ is said to be \emph{meet-complete} if every non-empty subset $\CS$ of $\CI$ admits a greatest lower-bound $\inf \CS$ in $\CI$. 
Furthermore, $\CI$ is called a \emph{complete lattice} if it is meet-complete and each non-empty subset of $\CI$ has a least upper-bound in $\CI$. 

\medskip

For a set $X$, we use $\CP_0(X)$ to denote the collection of all non-empty subsets of $X$ and use $\Delta_X$ to denote the diagonal in $X\times X$; namely,
$$\Delta_X := \{(x,x)\in X\times X:x\in X \}.$$ 
Moreover, for any $A,B\subseteq X\times X$, we put $A^{-1}:=\{(y,x):(x,y)\in A \}$ as well as
$$A\circ B:= \big\{(x,z)\in X\times X: \text{ there exists }y\in X \text{ such that }(x,y)\in A
\text{ and }(y,z)\in B \big\}.$$
In this case, $A^{-1}$ is called the \emph{inverse} of $A$ and $A\circ B$ is called the \emph{product} of $A$ and $B$. 
Furthermore, if $S\subseteq X$, we denote 
$$B[S] := \{y\in X: (x,y)\in B, \text{ for some }x\in S\},$$ 
and $B[x] := B[\{x\}]$. 

\medskip

\begin{defn}\label{defn:gen-met}
Let $\CI$ be a partially ordered set with a zero, and $\fd:X\times X\to \CI_\infty$ be a map. 

\smnoind
(a) We set 
$$\FD(z,\alpha) := \FD_\alpha[z] \qquad (z\in X;\alpha\in \CI)$$
(where $\FD_\alpha$ is as in \eqref{eqt:defn-D-alpha}), and call $\FD(z,\alpha)$ the \emph{ball of radius $\alpha$ with center $z$}.
Moreover, we denote 
\begin{equation*}
\CB_\fd:= \big\{\FD_\alpha: \alpha\in \CI\setminus\{0_\CI\} \big\}.
\end{equation*}

\smnoind
(b) Suppose that $\fd$ satisfies   
	\begin{enumerate}[\ \ \ \ 	(D1)]
		\item $\fd(x,x) = 0_\CI$ for any $x\in X$;
		
		\item $\fd(x,y) = \fd(y,x)$ for any $x,y\in X$. 
	\end{enumerate}
Then $\fd$ is called a \emph{semi-$\CI$-metric}. 
\end{defn}

\medskip

\section{Coarse $\CI$-metric}

\medskip

In this section, we define and study coarse metrics and relate them to coarse structures. 
Recall that if $\CE\subseteq \CP_0(X\times X)$ such that $\Delta_X\in \CE$, and $\CE$ is closed under the formation of subsets, inverses, products and finite unions, then $\CE$ is called a \emph{coarse structure} on $X$. 
In this case, $(X,\CE)$ (or simply $X$) is called a \emph{coarse space}. 
A subcollection $\CB\subseteq \CE$ is called a base for $\CE$ if every element in $\CE$ is contained in an element of $\CB$. 

\medskip

\begin{defn}\label{defn:coarse-met}
Let $\CI$ be a upward directed set with a zero, and $\fd:X\times X\to \CI_\infty$ be a semi-$\CI$-metric 

\smnoind
(a) Suppose that there is a function $\Phi: \CI\to \CI$ such that $\fd(x,z)\leq \Phi(\alpha)$ whenever $\alpha\in \CI$ and $x,y,z\in X$ satisfy $\fd(x,y)\leq \alpha$ and $\fd(y,z)\leq \alpha$; in other words,
$$\FD_\alpha \circ \FD_\alpha \subseteq \FD_{\Phi(\alpha)} \qquad (\alpha\in \CI).$$
Then $\fd$ is called a \emph{coarse $\CI$-metric} on $X$, and $(X,\CI, \fd)$ is a called a \emph{coarse metric space}.  

\smnoind
(b) A coarse $\CI$-metric $\fd$ is said to be \emph{saturated} if for each $\alpha, \beta\in \CI$, 
\begin{itemize}
	\item the inclusion $\FD_\alpha \subseteq \FD_\beta$ implies $\alpha \leq \beta$;
	
	\item for any subset $S\subseteq \FD_\alpha$ with $S^{-1} = S$ and $\Delta_X\subseteq S$, one can find $\gamma\in \CI$ such that $S = \FD_\gamma$. 
\end{itemize}

\smnoind
(c) Suppose that $\CI'$ is another upward directed set with a zero and $\fd'$ is a coarse $\CI'$-metric on $X$. 
We say that $\fd$ is \emph{coarsely dominated by} $\fd'$ (and denote this by $\fd \preceq \fd'$) if there is \textcolor{red}{an increasing} map $\Gamma: \CI'\to \CI$ such that $\fd(x,y) \leq \Gamma_\infty(\fd'(x,y))$ for every $x,y\in X$ (see \eqref{eqt:def-Lambda-infty}). 
If $\fd \preceq \fd'$ and $\fd'\preceq \fd$, then we say that $\fd$ is \emph{coarsely equivalent to} $\fd'$ and denote this by $\fd \sim \fd'$. 
\end{defn}

\medskip

\begin{rem}\label{rem:coarse-met}
Let $\fd$ be a coarse $\CI$-metric.

\smnoind
(a) If $\Phi$ is a map satisfying the requirement in Definition \ref{defn:coarse-met}(a), then it is not hard to see that $\fd(x,y)\leq \Phi(\fd(x,y))$ ($x,y\in X$). 
Moreover, as $\CI$ is upward directed, one can always find a map $\Phi$ with $\alpha\leq \Phi(\alpha)$ ($\alpha\in \CI$) that satisfies the requirement of Definition \ref{defn:coarse-met}(a). 

\smnoind
(b) Suppose, in addition, that $\CI$ is meet-complete. 
Then clearly, $\bigcap_{\alpha\in \CS} \FD_\alpha \subseteq \FD_{\inf \CS}$ for $\CS\subseteq \CI$. 
We set 
$$\CI(\alpha):= \{\beta\in \CI: \FD_\alpha \circ \FD_\alpha \subseteq \FD_\beta \} \qquad (\alpha\in \CI),$$
and define $\hat \Phi(\alpha):= \inf \CI(\alpha)$. 
It is obvious that $\hat \Phi$ is increasing. 
If $x,y,z\in X$ satisfying $\fd(x,y)\leq \alpha$ and $\fd(y,z)\leq \alpha$, then $\fd(x,z)\leq \beta$ for any $\beta\in \CI(\alpha)$, and hence $\fd(x,z)\leq \hat \Phi(\alpha)$. 
Thus, in the case when $\CI$ is meet-complete, we can always find an increasing map $\Phi$ satisfying the requirement in Definition \ref{defn:coarse-met}(a). 

\smnoind
\textcolor{red}{(c) It makes no harm to assume that the directed set $\CI$ where a coarse metric $\fd$ takes values is meet complete. 
In fact, for every $\alpha\in \CI$, we set $\tilde \alpha:=\{\beta\in \CI: \beta\leq \alpha\}\in \CP_0(\CI)$.
Denote  
$$\tilde \CI:= \{A\in \CP_0(\CI): A\subseteq \tilde \alpha, \text{ for some }\alpha \in \CI\},$$
and define $j:\CI \to \tilde \CI$ to be the map sending $\alpha$ to $\tilde \alpha$. 
Then $\tilde \CI$ is a meet complete upward directed set with zero (namely, $\tilde 0$) and $j$ is an order preserving injection.
We set $\tilde \Phi: \tilde \CI \to \tilde \CI$ such that 
$$\ti \Phi(A) := \bigcap \{j(\Phi(\alpha)): \alpha \in \CI; A \subseteq \ti \alpha\} \qquad (A\in \ti \CI). $$ 
If we put $\tilde \fd := j\circ \fd$, then $\tilde \fd$ is a coarse $\tilde \CI$-metric.
For every $\alpha \in \CI$, one has $\FD_\alpha = \FD_{\tilde \alpha}$. 
On the other hand, for each $A\in \tilde \CI$, there exists $\alpha\in \CI$ with $A \subseteq \tilde \alpha$, and hence $\FD_A \subseteq \FD_{\tilde \alpha} = \FD_\alpha$. 
Thus, the coarse structures generated by $\{\FD_A: A\in \ti \CI \}$ and by $\{\FD_\alpha:\alpha\in \CI\}$ are the same.} 
\end{rem}

\medskip

Let us begin will the following easy fact.

\medskip

\begin{lem}\label{lem:meet-clo-tot-ord}
Let $\CT$ be a subcollection of $\CP_0(X\times X)$. 
We set $\CT_\mathrm{s} := \{(A\cap A^{-1})\cup \Delta_X: A\in \CT \}$ as well as  
$$\overline{\CT} := \Big\{\bigcap \CS: \emptyset \neq \CS\subseteq \CT_\mathrm{s} \Big\}.$$
The collection $\overline{\CT}$ is closed under arbitrary intersections (i.e. the intersection of any subset of $\overline{\CT}$ belongs to $\overline{\CT}$). 
Moreover, if $\CT$ is totally ordered, then so is $\overline{\CT}$. 
\end{lem}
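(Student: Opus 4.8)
The plan is to verify the two assertions separately. For closure under intersections, I would take an arbitrary nonempty family $\{B_i\}_{i\in I}$ of members of $\overline{\CT}$, write each $B_i=\bigcap\CS_i$ with $\emptyset\neq\CS_i\subseteq\CT_\mathrm{s}$, and note the elementary identity $\bigcap_{i\in I}B_i=\bigcap\big(\bigcup_{i\in I}\CS_i\big)$ (an element lies on the left iff it lies in every member of every $\CS_i$, iff it lies in every member of $\bigcup_{i\in I}\CS_i$). Since $\bigcup_{i\in I}\CS_i$ is again a nonempty subset of $\CT_\mathrm{s}$, this intersection belongs to $\overline{\CT}$. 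Here one only intersects over nonempty subfamilies of $\CT_\mathrm{s}$, which matches the way $\overline{\CT}$ is defined, so the empty intersection never arises. This step is just bookkeeping with iterated intersections and presents no difficulty.

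For the second assertion I would argue in two steps. First, if $A,B\in\CT$ with $A\subseteq B$, then $A^{-1}\subseteq B^{-1}$, and hence $(A\cap A^{-1})\cup\Delta_X\subseteq(B\cap B^{-1})\cup\Delta_X$; thus, if $\CT$ is totally ordered by inclusion, then so is $\CT_\mathrm{s}$. Second, assuming $\CT_\mathrm{s}$ totally ordered, I would show any two elements $B_1=\bigcap\CS_1$ and $B_2=\bigcap\CS_2$ of $\overline{\CT}$ are comparable. Arguing by contradiction, suppose $p\in B_1\setminus B_2$ and $q\in B_2\setminus B_1$. Then there is $C_2\in\CS_2$ with $p\notin C_2$, and $C_1\in\CS_1$ with $q\notin C_1$. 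Since $C_1,C_2\in\CT_\mathrm{s}$ are comparable, either $C_1\subseteq C_2$, forcing $p\in B_1\subseteq C_1\subseteq C_2$ and contradicting $p\notin C_2$; or $C_2\subseteq C_1$, forcing $q\in B_2\subseteq C_2\subseteq C_1$ and contradicting $q\notin C_1$. In either case we get a contradiction, so $B_1$ and $B_2$ are comparable, and $\overline{\CT}$ is totally ordered.

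The only mildly delicate point is this last witness-chasing argument; everything else follows directly from the definitions. I would also dispose of the trivial case $\CT=\emptyset$ (in which $\CT_\mathrm{s}=\overline{\CT}=\emptyset$) at the outset, so that both claims hold vacuously there.
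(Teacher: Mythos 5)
Your proposal is correct and follows essentially the same elementary route as the paper: the paper dismisses the first claim as obvious (your union-of-families identity is exactly the intended verification) and proves the second by a direct dichotomy on $\CC,\D\subseteq\CT_\mathrm{s}$, whereas you reach the same conclusion by contradiction with witness points $p,q$; the two arguments are interchangeable. Your explicit checks that $\CT_\mathrm{s}$ inherits the total order and that only nonempty subfamilies need be intersected are fine details the paper leaves implicit.
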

\begin{proof}
The first claim is obvious. 
Suppose now that $\CT$ is totally ordered.
Then $\CT_\mathrm{s}$ is also totally ordered. 
Consider $\CC, \D\subseteq \CT_\mathrm{s}$. 
Then either there exists $C_0\in \CC$ such that $C_0\subseteq D$ for every $D\in \D$, or for each $C\in \CC$, one can find $D\in \D$ with $D\subseteq C$. 
In the first case, we have $\bigcap \CC \subseteq C_0 \subseteq \bigcap \D$. 
In the second case, we know that $\bigcap \D \subseteq \bigcap \CC$. 
\end{proof}

\medskip

\begin{thm}\label{thm:coarse metric}
Let $X$ be a set. 

\smnoind
(a) Suppose that $\CI$ is a \textcolor{red}{meet complete} upward directed set with a zero and $\fd$ is a coarse $\CI$-metric on $X$. 
Then 
$\CB_\fd \cup \{\FD_{0_\CI}\}$ 
(see Defintion \ref{defn:gen-met}(a)) is a base for a coarse structure $\CE_\fd$ on $X$. 
Moreover, if $\CI'$ is another upward directed set with a zero and $\fd'$ is a $\CI'$-metric on $X$, then $\fd$ is coarsely equivalent to $\fd'$ if and only if $\CE_\fd = \CE_{\fd'}$. 

\smnoind
(b) Let $\CE$ be a coarse structure on $X$. 
There is a unique upward directed set $\CI^\CE$ with a zero such that one can find a (necessarily unique) saturated coarse $\CI^\CE$-metric $\fd^\CE$ with $\CE = \CE_{\fd^\CE}$. 
In this case, $\CI^\CE_\infty$ is a complete lattice. 

\smnoind
(c) Let $\CB$ be a base for a coarse structure $\CE$ on $X$, and $\overline{\CB}$ be as in Lemma \ref{lem:meet-clo-tot-ord}. 
Then $\overline{\CB}$ is a meet-complete upward directed set, and there is a coarse $\overline{\CB}$-metric $\fd^\CB$ such that $\CE = \CE_{\fd^\CB}$. 
\end{thm}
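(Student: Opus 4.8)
First I would verify that $\CB_\fd\cup\{\FD_{0_\CI}\}$ satisfies the base axioms for a coarse structure, the recurring mechanism being the identification of an index $\alpha$ with the entourage $\FD_\alpha$ it names: (D1) gives $\Delta_X\subseteq\FD_{0_\CI}$; (D2) gives $\FD_\alpha^{-1}=\FD_\alpha$; upward directedness of $\CI$ yields $\FD_\alpha\cup\FD_\beta\subseteq\FD_\gamma$ whenever $\alpha,\beta\le\gamma$; and for products one takes $\gamma\ge\alpha,\beta$ and uses $\FD_\alpha\circ\FD_\beta\subseteq\FD_\gamma\circ\FD_\gamma\subseteq\FD_{\Phi(\gamma)}$. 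The generated coarse structure is then $\CE_\fd=\{E\subseteq X\times X:E\subseteq\FD_\alpha\text{ for some }\alpha\in\CI\}$. For the ``moreover'' clause, the forward implication is quick: an increasing map $\Gamma$ witnessing $\fd\preceq\fd'$ forces $\FD'_{\alpha'}\subseteq\FD_{\Gamma(\alpha')}$, so $\CE_{\fd'}\subseteq\CE_\fd$, and symmetrically. Conversely, assuming $\CE_\fd=\CE_{\fd'}$, I would set $\Gamma(\alpha'):=\inf\{\alpha\in\CI:\FD'_{\alpha'}\subseteq\FD_\alpha\}$; this is a well-defined element of $\CI$ since the set is non-empty (as $\FD'_{\alpha'}\in\CE_{\fd'}=\CE_\fd$) and $\CI$ is meet-complete, and Remark \ref{rem:coarse-met}(b) gives $\FD'_{\alpha'}\subseteq\bigcap\{\FD_\alpha:\FD'_{\alpha'}\subseteq\FD_\alpha\}\subseteq\FD_{\Gamma(\alpha')}$. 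The map $\Gamma$ is increasing because enlarging $\alpha'$ shrinks the index set of the infimum, and evaluating at a pair $(x,y)$ then gives $\fd\preceq\fd'$; the inequality $\fd'\preceq\fd$ is symmetric.

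\textbf{Part (b).} I would take $\CI^\CE:=\{E\in\CE:E=E^{-1}\text{ and }\Delta_X\subseteq E\}$ ordered by inclusion, with zero $\Delta_X$, and define $\fd^\CE(x,y):=\bigcap\{E\in\CI^\CE:(x,y)\in E\}$, read as $\infty$ when this family is empty. Then $\CI^\CE$ is upward directed (via unions), and $\CI^\CE_\infty$ is a complete lattice: an intersection of symmetric controlled sets containing $\Delta_X$ is again such a set (closure of $\CE$ under subsets), so $\CI^\CE_\infty$ is meet-complete with $\inf\emptyset=\infty$, and a meet-complete poset with a greatest element has all suprema. The crucial point is $\FD_E=E$ for each $E\in\CI^\CE$, which holds because $\fd^\CE(x,y)\le E$ is equivalent to $(x,y)\in E$. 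Granting this, (D1) follows from $(x,x)\in\Delta_X=0_{\CI^\CE}$, (D2) from symmetry of the members of $\CI^\CE$, and $\fd^\CE$ is a coarse $\CI^\CE$-metric via $\Phi(E):=E\circ E\in\CI^\CE$ since $\FD_E\circ\FD_E=E\circ E=\FD_{\Phi(E)}$. The base of $\CE_{\fd^\CE}$ is $\{\FD_E:E\in\CI^\CE\}=\CI^\CE$, and since every $F\in\CE$ lies in $F\cup F^{-1}\cup\Delta_X\in\CI^\CE$ we get $\CE_{\fd^\CE}=\CE$. Saturation is then immediate: the first bullet is just the order on $\CI^\CE$, and for the second, a symmetric $S\supseteq\Delta_X$ with $S\subseteq\FD_E=E$ is itself in $\CI^\CE$, so $S=\FD_S$. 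For uniqueness, given any saturated coarse $\CI$-metric $\fd$ with $\CE_\fd=\CE$, I would show that $\alpha\mapsto\FD_\alpha$ is an order isomorphism of $\CI$ onto $\CI^\CE$ --- injective and order-reflecting by the first saturation bullet, surjective by the second together with the fact that every $\FD_\alpha$ belongs to $\CI^\CE$ --- and that under it $\fd(x,y)=\min\{\alpha:(x,y)\in\FD_\alpha\}$ (or $\infty$) is carried to $\fd^\CE(x,y)$, so $\CI^\CE$ and $\fd^\CE$ are unique up to the obvious identification.

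\textbf{Part (c).} With $\CB_\mathrm{s}$ and $\overline{\CB}$ as in Lemma \ref{lem:meet-clo-tot-ord}, that lemma already gives that $\overline{\CB}$ is closed under arbitrary intersections, hence meet-complete with least element $\bigcap\CB_\mathrm{s}$; it is upward directed because for $B_1,B_2\in\overline{\CB}\subseteq\CE$ the set $B_1\cup B_2$ is symmetric, controlled and contains $\Delta_X$, so it sits inside $(A\cap A^{-1})\cup\Delta_X\in\CB_\mathrm{s}\subseteq\overline{\CB}$ for a suitable $A\in\CB$. I would then repeat the recipe of part (b): put $\fd^\CB(x,y):=\bigcap\{B\in\overline{\CB}:(x,y)\in B\}$ (or $\infty$), note $\FD_B=B$ for $B\in\overline{\CB}$ by the same argument, and take $\Phi(B)$ to be any member of $\overline{\CB}$ containing $B\circ B$, which exists because $B\circ B\in\CE$ is symmetric and contains $\Delta_X$, hence lies in some $(A\cap A^{-1})\cup\Delta_X$. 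The base of $\CE_{\fd^\CB}$ is $\overline{\CB}$, and since every $F\in\CE$ satisfies $F\cup F^{-1}\subseteq A\cap A^{-1}$ for some $A\in\CB$, whence $F\subseteq(A\cap A^{-1})\cup\Delta_X\in\overline{\CB}$, we obtain $\CE_{\fd^\CB}=\CE$. In particular, part (b) can be recovered as the case $\CB=\CE$ of this construction, together with the additional verifications of saturation, uniqueness, and the complete-lattice assertion.

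\textbf{Anticipated obstacle.} The step doing the real work in all three parts is the identity $\FD_\alpha=\alpha$; once it is installed, the remainder is bookkeeping with the partial orders. The most delicate point is the uniqueness claim in (b): one must check carefully that the two saturation conditions force $\alpha\mapsto\FD_\alpha$ to be a bijection onto $\CI^\CE$, and must keep the convention $\inf\emptyset=\infty$ consistent throughout, since an a priori competing index set carries no meet-completeness hypothesis of its own.
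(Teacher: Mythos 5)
Your proposal is correct and follows essentially the same route as the paper: the same index sets $\CI^\CE$ and $\overline{\CB}$, the same metrics (your $\fd^\CE(x,y)=\bigcap\{E\in\CI^\CE:(x,y)\in E\}$ coincides with the paper's $\Delta_X\cup\{(x,y),(y,x)\}$), the same $\Phi(E)=E\circ E$, the same verification of saturation and of the order isomorphism $\alpha\mapsto\FD_\alpha$, and the same $\Gamma(\alpha'):=\inf\{\alpha\in\CI:\FD'_{\alpha'}\subseteq\FD_\alpha\}$ for the converse in (a). The only caveat, which is present in the paper's own proof as well, is that the symmetric direction $\fd'\preceq\fd$ of the ``moreover'' clause requires an infimum taken in $\CI'$, whose meet-completeness is not hypothesised; this is resolved only by tacitly invoking Remark \ref{rem:coarse-met}(c).
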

\begin{proof}
(a) For any $\alpha, \beta\in \CI$, if $\gamma\in \CI$ satisfying $\alpha\leq \gamma$ and $\beta\leq \gamma$, then $\FD_\alpha \circ \FD_\beta \subseteq \FD_{\Phi(\gamma)}$. 
This gives the first statement. 
For the second statement, we note $\fd \preceq \fd'$ if and only if $\CE_{\fd'}\subseteq \CE_\fd$ \textcolor{red}{(for the backward implication, we define $\Gamma(\alpha'):=\inf\{\alpha\in \CI: \FD_{\alpha'} \subseteq \FD_\alpha\}$ for every $\alpha'\in \CI'$)}. 

\smnoind
(b) As $\CE$ is closed under the formation of finite unions and subsets, we see that 
\begin{equation}\label{eqt:def-I-E}
\CI^\CE := \{E\in \CE: E^{-1} = E; \Delta_X\subseteq E \}
\end{equation}
is a meet-complete lattice (under inclusion), and it contains $\Delta_X$ as its smallest element (i.e. zero). 
Moreover, since every subset of $\CI^\CE$ that has a upper bound in $\CI^\CE$ has a least upper bound in $\CI^\CE$, we know that $\CI^\CE_\infty$ is a complete lattice.
Set 
$$\fd^\CE(x,y) := \begin{cases}
\Delta_X \cup \{(x,y), (y,x) \}  \quad &\text{when }(x,y)\in E \text{ for some } E\in \CE\\
\infty  &\text{otherwise.}
\end{cases}
$$
Obviously, $\fd^\CE: X\times X\to \CI^\CE_\infty$ is a semi-$\CI^\CE$-metric. 
If we put $\Phi^\CE(E):= E\circ E\in \CI^\CE$ for each $E\in \CI^\CE$, then $\Phi^\CE$ will satisfy the requirement in Definition \ref{defn:coarse-met}(a), and $\fd^\CE$ is a coarse $\CI^\CE$-metric. 
Furthermore, as 
\begin{equation}\label{eqt:E-diag}
E = \{(x,y)\in X\times X: \fd^\CE(x,y)\leq E \} \qquad (E\in \CI^\CE),
\end{equation}
we see that $\fd^\CE$ is saturated and that $\CE = \CE_{\fd^\CE}$. 

Suppose now that $\CI$ is another upward directed set with a zero and $\fd$ is a saturated coarse $\CI$-metric on $X$ with $\CE = \CE_\fd$. 
Then the saturation assumption of $\fd$ implies that 
$\alpha \mapsto \FD_\alpha$ is an order isomorphism from $\CI$ onto $\CI^\CE$.
Furthermore, it follows from the definitions and \eqref{eqt:E-diag} that for any $u,v\in X$ and $\beta\in \CI$, one has 
\begin{equation}\label{eqt:d=d-CE}
\fd(u,v)\leq \beta\quad \text{if and only if} \quad \fd^\CE(u,v)\leq \FD_\beta.
\end{equation}
For every $x,y\in X$, it is not hard to verify, through \eqref{eqt:d=d-CE}, that $\fd^\CE(x,y) = \FD_{\fd(x,y)}$. 
In other words, $\fd$ is the same as $\fd^\CE$ under the order isomorphism $\alpha\mapsto \FD_\alpha$. 

\smnoind
(c) The meet-completeness of $\overline{\CB}$ follows from Lemma \ref{lem:meet-clo-tot-ord}. 
Suppose that $\CS,\CT\in \CP_0(\CB_\mathrm{s})$, where $\CB_\mathrm{s}$ is as in Lemma \ref{lem:meet-clo-tot-ord}. 
Take any $S\in \CS$ and $T\in \CT$. 
As $\CB_\mathrm{s}$ is a base for $\CE$, there exists $B\in \CB_\mathrm{s}$ with $S\cup T\subseteq B$. 
Then $B\in \overline{\CB}$ and $(\bigcap \CS)\cup (\bigcap \CT)\subseteq B$. 
This shows that $\overline{\CB}$ is upward directed. 
Let us define 
\begin{equation}\label{eqt:defn-fd-CB}
\fd^\CB(x,y) := \bigcap \{D\in \overline{\CB}: (x,y)\in D \} \qquad (x,y\in X);
\end{equation}
here we use the convention that $\bigcap \emptyset = \infty$. 
We also set 
$$\Phi^\CB(B):= \bigcap \{D\in \overline{\CB}: B\circ B\in D \} \qquad (B\in \overline{\CB}).$$
Clearly, $\Phi^\CB$ satisfies the requirement in Definition \ref{defn:coarse-met}(a), and $\fd^\CB$ is a coarse $\overline{\CB}$-metric. 
Moreover, as $B = \{(x,y)\in X\times X: \fd^\CB(x,y)\leq B \}$ for any $B\in \overline{\CB}$, we know that $\CE = \CE_{\fd^\CB}$. 
\end{proof}

\medskip

Obviously, in the case when $\CI\neq \{0_\CI\}$ and $\fd$ is a coarse-$\CI$-metric, then $\CB_\fd$ is a base for $\CE_\fd$. 
On the other hand, although $\CI^\CE_\infty$ in part (b) above is a complete lattice, it does not mean that $\CI^\CE\cup \{X\}$ is closed under arbitrary unions. 
In fact, if $\CS\subseteq \CI^\CE$ such that $\bigcup \CS\notin \CI^\CE$, then the least upper bound of $\CS$ in $\CI^\CE_\infty$ is $\infty$.

\medskip

\begin{rem}
The maps $\Phi^\CE$ and $\Phi^\CB$ in the proof of parts (b) and (c) of Theorem \ref{thm:coarse metric} are increasing and satisfy the requirement in Definition \ref{defn:coarse-met}(a) for $\fd^\CE$ and $\fd^\CB$, respectively. 
Moreover, one has $E\subseteq \Phi^\CE(E)$ (respectively, $B\subseteq \Phi^\CB(B)$)  for every $E\in \CI^\CE$ (respectively, $B\in \overline{\CB}$). 
\end{rem}

\medskip

The following example tells us that if $\fd$ is a coarse $\CI$-metric and $\CJ$ is a upward directed set containing $\CI$, then the coarse structure induced by $\fd$ when $\fd$ is considered as a coarse $\CJ$-metric may not be the same as the one when $\fd$ is considered as a coarse $\CI$-metric.

\medskip

\begin{eg}\label{eg:diff-dir-set}
Let $(\BR,\fd_1)$ be the Euclidean metric space. 
Clearly, $\fd_1$ is a $\RP$-metric and the coarse structure generated by this $\RP$-metric is the usual one. 

However, if we set $\CJ = \RP_\infty$, then $\fd_1$ is also a $\CJ$-metric, but the coarse structure generated by this $\CJ$-metric is the ``trivial one'', because $\BR = \FD_\infty$ is a controlled set. 
\end{eg}

\medskip

One can express many concepts concerning coarse structures in terms of metric, which seem easier to understand and handle. 
Let us list some of them in the following. 

\medskip

\begin{prop}\label{prop:coarse-prop-met}
Let $(X,\CE)$ and $(Y,\CF)$ be two coarse spaces. 
Suppose that $\fd_X$ (respectively, $\fd_Y$) is a coarse $\CI$-metric on $X$ (respectively, coarse $\CJ$-metric on $Y$) that induces the underlying coarse structure. 
Let $f,g:X\to Y$ be two maps. 

\smnoind
(a) $(X, \CE)$ is coarsely connected if and only if the largest element $\infty \in \CI_\infty$ does not belong to  
$\fd_X(X\times X)$.

\smnoind
(b) $B\subseteq X$ is bounded if and only if one can find $(x,\alpha)\in X\times \CI$ with $B\subseteq \FD(x,\alpha)$. 

\smnoind
(c) \textcolor{red}{If $\CJ$ is meet complete, then} $f$ is bornologous if and only if $\fd_Y \circ (f\times f) \preceq \fd_X$.

\smnoind
(d) $f$ is proper if and only if there is a map $\Upsilon: Y\times \CJ \to X\times \CI$ with $f^{-1}(\FD(y,\beta))\subseteq \FD(\Upsilon(y,\beta))$, for each $(y,\beta)\in Y\times \CJ$.

\smnoind
(e) \textcolor{red}{If $\CI$ is meet complete, then} $f$ is effectively proper if and only if $\fd_X\preceq \fd_Y\circ (f\times f)$.

\smnoind
(f) $f$ and $g$ are close if and only if there exists $\beta\in \CJ$ with $\fd_Y(f(x),g(x))\leq \beta$, for any $x\in X$. 
\end{prop}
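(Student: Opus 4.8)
The plan is to establish the six equivalences one at a time, in each case unwinding the coarse-geometric definition and translating it via Theorem~\ref{thm:coarse metric}(a), which tells us that $\{\FD_\alpha:\alpha\in\CI\}$ is a base for $\CE$ and $\{\FD_\beta:\beta\in\CJ\}$ is a base for $\CF$, together with Remark~\ref{rem:coarse-met}. Parts (a), (b), (d) and (f) are pure definition-chasing. For (a): coarse connectedness of $(X,\CE)$ means $\Delta_X\cup\{(x,y),(y,x)\}\in\CE$ for all $x,y\in X$, and this set lies in $\CE$ iff it is contained in some $\FD_\alpha$ with $\alpha\in\CI$, which by symmetry of $\fd_X$ and (D1) is equivalent to $\fd_X(x,y)\neq\infty$. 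For (b): if $\emptyset\neq B$ is bounded then $B\times B\subseteq\FD_\alpha$ for some $\alpha$, so fixing $x_0\in B$ gives $B\subseteq\FD(x_0,\alpha)$; conversely $B\subseteq\FD(x_0,\alpha)$ forces $B\times B\subseteq\FD_\alpha\circ\FD_\alpha\subseteq\FD_{\Phi(\alpha)}\in\CE$ by the growth condition (the cases $B=\emptyset$ and $X=\emptyset$ being trivial). For (d): if $f$ is proper, then for each $(y,\beta)\in Y\times\CJ$ the ball $\FD(y,\beta)$ is bounded (by (b) for $Y$), so $f^{-1}(\FD(y,\beta))$ is bounded, hence contained in some ball $\FD(x,\alpha)$ by (b) for $X$; choosing such an $(x,\alpha)$ for every $(y,\beta)$ defines $\Upsilon$, and conversely any bounded $B\subseteq Y$ lies in a ball $\FD(y,\beta)$, whence $f^{-1}(B)\subseteq\FD(\Upsilon(y,\beta))$ is bounded. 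For (f): $f$ and $g$ are close iff $\{(f(x),g(x)):x\in X\}\in\CF$, and this set lies in $\CF$ iff it is contained in some $\FD_\beta$, i.e. iff $\fd_Y(f(x),g(x))\leq\beta$ for all $x$.

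Parts (c) and (e) are dual to each other and are the only ones invoking the meet-completeness hypothesis. For (c): one first checks that $\fd_Y\circ(f\times f)$ is again a coarse $\CJ$-metric, with the same growth function $\Phi$ as $\fd_Y$. If $f$ is bornologous, then $(f\times f)(\FD_\alpha)\in\CF$ for every $\alpha\in\CI$, so $\CJ_f(\alpha):=\{\beta\in\CJ:(f\times f)(\FD_\alpha)\subseteq\FD_\beta\}$ is non-empty and $\Gamma(\alpha):=\inf\CJ_f(\alpha)$ is a well-defined element of $\CJ$; one checks that $\Gamma$ is increasing, and Remark~\ref{rem:coarse-met}(b) applied to $\fd_Y$ gives $(f\times f)(\FD_\alpha)\subseteq\bigcap_{\beta\in\CJ_f(\alpha)}\FD_\beta\subseteq\FD_{\Gamma(\alpha)}$, so that $\fd_Y(f(x),f(x'))\leq\Gamma_\infty(\fd_X(x,x'))$ for all $x,x'$, i.e. $\fd_Y\circ(f\times f)\preceq\fd_X$. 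Conversely, any increasing $\Gamma:\CI\to\CJ$ witnessing $\fd_Y\circ(f\times f)\preceq\fd_X$ carries each base element $\FD_\alpha$ of $\CE$ into $\FD_{\Gamma(\alpha)}\in\CF$, so $(f\times f)(E)\in\CF$ for every $E\in\CE$ and $f$ is bornologous. Part (e) is proved verbatim after interchanging $X$ with $Y$, $\CE$ with $\CF$ and $\CI$ with $\CJ$: effective properness says $(f\times f)^{-1}(F)\in\CE$ for all $F\in\CF$, and one now builds the required increasing map $\CJ\to\CI$ by taking infima in the meet-complete set $\CI$.

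I expect the only real obstacle to be the construction of the increasing maps $\Gamma$ in (c) and (e) --- one must make sure $\Gamma$ lands in the appropriate directed set and is order-preserving --- and this is precisely where meet-completeness enters, through the identity $\bigcap_{\beta\in\CS}\FD_\beta=\FD_{\inf\CS}$ that is contained in Remark~\ref{rem:coarse-met}(b). Everything else reduces to chasing definitions; the only points needing a little care are the behaviour of the top value $\infty$ and the degenerate cases $B=\emptyset$ and $X=\emptyset$.
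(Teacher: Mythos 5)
Your proposal is correct and follows essentially the same route as the paper: definition-chasing for (a), (b), (d), (f), and for (c) and (e) the construction of the increasing map $\Gamma$ as an infimum in the meet-complete index set, using $\bigcap_{\beta\in\CS}\FD_\beta\subseteq\FD_{\inf\CS}$ from Remark~\ref{rem:coarse-met}(b). The only cosmetic difference is that in (b) you characterise boundedness via $B\times B\in\CE$ rather than the paper's $B\subseteq E[x]$; the two are standard equivalents and both yield the stated ball condition.
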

\begin{proof}
(a) Recall that $(X,\CE)$ is coarsely connected if and only if $\{(x,y)\}\in \CE$, for every $x,y\in X$. 
Clearly, this is equivalent to $\fd_X(x,y)\in \CI$, for every $x,y\in X$. 

\smnoind
(b) Recall that $B$ is bounded if and only if $B\subseteq E[x]$ for some $E\in \CE$ and $x\in X$. 
The equivalence in the statement is more or less trivial. 

\smnoind
(c) Recall that $f$ is bornologous if and only if $(f\times f)(\CE)\subseteq \CF$; equivalently, for every $\alpha\in \CI$, one can find $\textcolor{red}{\beta}\in \CJ$ such that $(f\times f)(\FD_\alpha)\subseteq \FD_{\textcolor{red}{\beta}}$.
\textcolor{red}{In this case, the map $\Gamma$ defined by  $\Gamma(\alpha):=\inf \{\beta\in \CJ: (f\times f)(\FD_\alpha)\subseteq \FD_{\beta} \}$ $(\alpha\in \CI$) is increasing.}
Now, for \textcolor{red}{an increasing} map $\Gamma:\CI\to \CJ$, the condition $(f\times f)(\FD_\alpha)\subseteq \FD_{\Gamma(\alpha)}$ is equivalent to the requirement as in Definition \ref{defn:coarse-met}(c) for $\fd_Y \circ (f\times f) \preceq \fd_X$.  

\smnoind
(d) Recall that $f$ is proper if and only if $f^{-1}(B)$ is bounded for any bounded set $B\subseteq Y$. 
Thus, this part follows directly from part (b). 

\smnoind
(e) Recall that $f$ is effectively proper if and only if $(f\times f)^{-1}(\CF)\subseteq \CE$. 
This is the same as saying that for $\beta\in \CJ$, one find $\textcolor{red}{\alpha}\in \CI$ such that $(f\times f)^{-1}(\FD_\beta)\subseteq \FD_{\textcolor{red}{\alpha}}$, or equivalently, 
\begin{quotation}
$\fd_X(x,y)\leq \textcolor{red}{\alpha}$ whenever $\fd_Y(f(x), f(y))\leq \beta$. 
\end{quotation}
\textcolor{red}{In this case, the map $\Gamma$ defined by  $\Gamma(\beta):=\inf \{\alpha\in \CI: (f\times f)^{-1}(\FD_\beta)\subseteq \FD_{\alpha}\}$ $(\beta\in \CJ$) is increasing.}
Now, \textcolor{red}{an increasing} map $\Gamma:\CJ \to \CI$ satisfies the above displayed statement \textcolor{red}{for $\beta = \Gamma(\alpha)$} if and only if it satisfies the requirement in Definition \ref{defn:coarse-met}(c) for \textcolor{red}{$\fd_X \preceq\fd_Y \circ (f\times f)$}.  

\smnoind
(f) Recall that $f$ and $g$ are close if and only if $\{(f(x), g(x)): x\in X \}\in \CF$, which is obviously the same as the requirement in the statement of part (f). 
\end{proof}

\medskip

Notice that in Example \ref{eg:diff-dir-set}, although the range of $\fd_1$ take the value ``$\infty$'', it is the largest element in $\CJ=\BR^+_\infty$, but not the largest element of $\CJ_\infty$. 
Therefore, the resulting coarse structure is connected. 

\medskip

On the other hand, we can ``reverse-engineer'' some concepts in coarse structure back to metric space terms. 
The following is such an example. 
Let us recall from \cite[Definition 3.9]{Roe} that a coarse space $(X,\CE)$ is said to have \emph{bounded geometry} if one can find $E\in \CE$ containing $\Delta_X$ such that $E^{-1} = E$ and
\begin{equation}\label{eqt:def-bdd-geom}
{\sup}_{x\in X}\max \big\{\mathrm{cap}_E((F\circ E)[x]), \mathrm{cap}_E((F^{-1}\circ E)[x])\big\} < \infty \qquad (F\in \CE),
\end{equation}
where $\mathrm{cap}_E(S) := \sup \{ m\in \BN: \text{there exist } y_1,...,y_m\in S \text{ with }(y_i,y_j)\notin E \text{ when } i\neq j\}$. 

\medskip

Clearly,  $\mathrm{cap}_{E'}(S) \leq \mathrm{cap}_E(S)$ if $E\subseteq E'$ and 
$\mathrm{cap}_{E}(S) \leq \mathrm{cap}_E(S')$ if $S\subseteq S'$. 
Moreover, one has $F\subseteq F\circ E\in \CE$. 
Consequently, one may replace \eqref{eqt:def-bdd-geom} with 
${\sup}_{x\in X}\mathrm{cap}_E(F[x]) < \infty$, for every $F\in \CE$ with $F^{-1} = F$. 
Thus, in the case when $\CE$ is defined by a coarse $\CI$-metric, $(X,\CE)$ has boudned geometry if and only if one can find $\alpha_1\in \CI$ satisfying 
\begin{equation}\label{eqt:equiv-bdd-geom}
{\sup}_{x\in X} \mathrm{cap}_{\FD_{\alpha_1}}(\FD(x,\alpha)) < \infty \qquad (\alpha\in \CI).
\end{equation}

\medskip

\begin{prop}\label{prop:bdd-geom}
Let $(X,\CE)$ be a coarse space and $\fd_X$ be a coarse $\CI$-metric defining $\CE$. 
The following statements are equivalent. 

	\begin{enumerate}[B1)]
		\item $(X, \CE)$ has bounded geometry. 
		
		\item There is $\alpha_1\in \CI$ satisfying: for any $\alpha\in \CI$, there exists $n_1\in \BN$ such that each ball of radius $\alpha$ contains at most $n_1$ points 
		with their pairwise $\fd_X$-distances not dominated by $\alpha_1$ (i.e. $\fd_X(x,y)\nleq \alpha_1$). 
		
		\item There is $\alpha_2\in \CI$ satisfying: for any $\alpha\in \CI$, there exists $n_2\in \BN$ such that for each $x\in X$, the ball $\FD(x,\alpha)$ contains at most $n_2$ disjoint relative balls of radius $\alpha_2$ (here, relative balls of radius $\alpha_2$ are subsets of the form $\FD(x,\alpha)\cap \FD(y,\alpha_2)$ for some $y\in \FD(x,\alpha)$). 
		
		\item There is $\alpha_3\in \CI$ satisfying: for any $\alpha\in \CI$, there exists $n_3\in \BN$ such that each ball of radius $\alpha$ is contained in the union of $n_3$ balls of radius $\alpha_3$. 
	\end{enumerate}
\end{prop}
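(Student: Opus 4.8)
The plan is first to observe that B1 and B2 say the same thing. Since $\fd_X$ is symmetric, $(y_i,y_j)\notin\FD_{\alpha_1}$ holds exactly when $\fd_X(y_i,y_j)\nleq\alpha_1$, so $\mathrm{cap}_{\FD_{\alpha_1}}(\FD(x,\alpha))$ is precisely the supremum of the cardinalities of the finite subsets of $\FD(x,\alpha)$ whose points have pairwise $\fd_X$-distances not dominated by $\alpha_1$; hence the reformulation \eqref{eqt:equiv-bdd-geom} of bounded geometry obtained before the statement is verbatim B2, and B1$\Leftrightarrow$B2. It then suffices to prove the cycle B2$\Rightarrow$B4$\Rightarrow$B3$\Rightarrow$B2. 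Fix a function $\Phi:\CI\to\CI$ with $\FD_\alpha\circ\FD_\alpha\subseteq\FD_{\Phi(\alpha)}$ for every $\alpha\in\CI$, as in Definition \ref{defn:coarse-met}(a). The only substitute for the absent triangle inequality we shall need is: if $\fd_X(u,w)\leq\beta$ and $\fd_X(w,v)\leq\beta$ then, as $\FD_\beta=\FD_\beta^{-1}$, we get $(u,v)\in\FD_\beta\circ\FD_\beta\subseteq\FD_{\Phi(\beta)}$, i.e. $\fd_X(u,v)\leq\Phi(\beta)$; in particular any two points lying in a common ball of radius $\beta$ are at $\fd_X$-distance at most $\Phi(\beta)$.

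For B2$\Rightarrow$B4, set $\alpha_3:=\alpha_1$. Given $\alpha\in\CI$, let $n_1$ be the bound supplied by B2 and put $n_3:=n_1$. Fix $x\in X$; the set $\FD(x,\alpha)$ is non-empty (it contains $x$), and by B2 all of its subsets whose points have pairwise $\fd_X$-distances not dominated by $\alpha_1$ have cardinality at most $n_1$, so among them we may pick one, $Y$, of largest cardinality. By maximality no $z\in\FD(x,\alpha)$ can be adjoined to $Y$ without destroying the separation property, which forces $\fd_X(z,y)\leq\alpha_1$ for some $y\in Y$, i.e. $z\in\FD(y,\alpha_1)$. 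Hence $\FD(x,\alpha)\subseteq\bigcup_{y\in Y}\FD(y,\alpha_1)$, a union of at most $n_3$ balls of radius $\alpha_3$, which is B4.

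For B4$\Rightarrow$B3, set $\alpha_2:=\Phi(\alpha_3)$. Given $\alpha$, take $n_3$ as in B4, put $n_2:=n_3$, fix $x\in X$, and write $\FD(x,\alpha)\subseteq\bigcup_{l=1}^{n_3}\FD(w_l,\alpha_3)$. Let $R_1,\dots,R_k$ be pairwise disjoint relative balls of radius $\alpha_2$ in $\FD(x,\alpha)$, say $R_j=\FD(x,\alpha)\cap\FD(y_j,\alpha_2)$ with $y_j\in\FD(x,\alpha)$. Then $y_j\in R_j$, so disjointness forces $y_j\notin\FD(y_i,\alpha_2)$, i.e. $\fd_X(y_i,y_j)\nleq\alpha_2=\Phi(\alpha_3)$, for $i\neq j$. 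If two of the $y_j$ lay in one ball $\FD(w_l,\alpha_3)$, the observation above would give $\fd_X(y_i,y_j)\leq\Phi(\alpha_3)$, a contradiction; hence the $y_j$ lie in pairwise distinct members of $\{\FD(w_1,\alpha_3),\dots,\FD(w_{n_3},\alpha_3)\}$, so $k\leq n_3=n_2$, which is B3. For B3$\Rightarrow$B2 one argues dually with $\alpha_1:=\Phi(\alpha_2)$: if $z_1,\dots,z_m\in\FD(x,\alpha)$ have pairwise $\fd_X$-distances not dominated by $\alpha_1=\Phi(\alpha_2)$, then the sets $R_i:=\FD(x,\alpha)\cap\FD(z_i,\alpha_2)$ are relative balls of radius $\alpha_2$ (non-empty, since $z_i\in R_i$) and pairwise disjoint, because a common point $w\in R_i\cap R_j$ would satisfy $\fd_X(z_i,w)\leq\alpha_2$ and $\fd_X(w,z_j)\leq\alpha_2$, whence $\fd_X(z_i,z_j)\leq\Phi(\alpha_2)=\alpha_1$, contrary to the choice of the $z_i$. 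By B3 there are at most $n_2$ such $R_i$, so $m\leq n_2$; taking $n_1:=n_2$ yields B2.

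Since the reasoning is entirely combinatorial, I expect the only real difficulty to be the bookkeeping forced by the absence of a triangle inequality: each comparison of the centres of two balls costs one application of $\Phi$, so in each of the three implications the auxiliary radius has to be chosen either as the given one or as its image under $\Phi$, in exactly the right way; and one must keep checking that the (relative) balls in play are non-empty and that the maximal separated set used in B2$\Rightarrow$B4 exists precisely because of the a priori cardinality bound hypothesised in B2.
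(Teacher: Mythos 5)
Your proof is correct and follows essentially the same combinatorial route as the paper: (B1)$\Leftrightarrow$(B2) by reformulating \eqref{eqt:equiv-bdd-geom}, and the passage between separated point sets and disjoint relative balls, with the radius adjusted by one application of $\Phi$, exactly as in the paper's (B2)$\Leftrightarrow$(B3). The only organizational difference is that you treat (B4) by a direct maximal-separated-set covering argument closing the cycle (B2)$\Rightarrow$(B4)$\Rightarrow$(B3)$\Rightarrow$(B2), whereas the paper deduces (B1)$\Leftrightarrow$(B4) by citing Roe's comparison $\mathrm{cap}_{E\circ E}(S)\leq \mathrm{ent}_E(S)\leq \mathrm{cap}_E(S)$; your argument is essentially an inlined proof of that inequality.
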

\begin{proof}
$(B1)\Leftrightarrow (B2)$ This equivalence is simply a matter of reformulating \eqref{eqt:equiv-bdd-geom}. 

\smnoind
$(B2)\Rightarrow (B3)$ 
If $\FD(x,\alpha)$ contains at most $n_1$ points with their mutual $\fd_X$-distances not dominated by $\alpha_1$, then clearly, it cannot contain more than $n_1$ disjoint relative balls of radius $\alpha_1$. 

\smnoind
$(B3)\Rightarrow (B2)$ 
Suppose that $\FD(x,\alpha)$ contains at most $n_2$ disjoint relative balls of radius $\alpha_2$. 
Let $\Phi$ be as in Definition \ref{defn:coarse-met}(a) and $\alpha_1 := \Phi(\alpha_2)$. 
Then $\FD(x,\alpha)$ cannot contain more than  $n_2$ points with their mutual $\fd_X$-distance not dominated by $\alpha_1$. 

\smnoind
$(B1)\Leftrightarrow (B4)$
Let us recall from \cite[Definition 3.1(a)]{Roe} the following definition: 
$$\mathrm{ent}_E(S):= \inf \{n\in \BN: \text{ there exist }x_1,...,x_n\in X \text{ with } S \subseteq E[x_1]\cup \cdots \cup E[x_n] \}$$
(where $\inf \emptyset := \infty$). 
It was shown in \cite[Proposition 3.2(d)]{Roe} that 
$$\mathrm{cap}_{E\circ E}(S) \leq \mathrm{ent}_E(S) \leq \mathrm{cap}_E(S).$$
Therefore, one can replace ``$\mathrm{cap}$'' by ``$\mathrm{ent}$'' in the definition of bounded geometry; namely, 
$${\sup}_{x\in X}\mathrm{ent}_E(F[x]) < \infty, \quad \text{ for every }F\in \CE \text{ with } F^{-1} = F.$$ 
In other words, $(X, \CE)$ has bounded geometry if and only if 
there exists $\alpha_3\in \CI$ such that 
\begin{equation*}
{\sup}_{x\in X} \mathrm{ent}_{\FD_{\alpha_3}}(\FD(x,\alpha)) < \infty \qquad (\alpha\in \CI).
\end{equation*}
This statement is clearly equivalent to Statement (B4). 
\end{proof}

\medskip

Using the equivalence of Statements (B1) and (B2), one obtains an easy way to see that every bounded geometry space is coarse equivalent to a uniformly discrete space. 
In fact, consider $\CB = \{\FD(x_i, \alpha_1): i\in \KI \}$ to be a maximal collection of disjoint balls of radius $\alpha_1$.  
Since $\bigcup_{i\in \KI} \FD(x_i, \Phi(\alpha_1)) = X$, we know that $X$ is coarse equivalent to its subspace  $\{x_i:i\in \KI \}$ (as in Remark \ref{rem:coarse-met}(a), we may assume that $\alpha_1\leq \Phi(\alpha_1)$), and the later is uniformly discrete. 

\medskip

We end this section with a discussion of the coarse structure induced on the collection $\CP_0(X)$ of non-empty subsets of a coarse space $X$. 

\medskip

\begin{defn}\label{defn:Hausd-coar-str}
Let $(X,\CE)$ be a coarse space. 
For any $E\in \CI^\CE$ (see \eqref{eqt:def-I-E}), we set 
\begin{equation}\label{eqt:def-check-E}
\check E:= \big\{(R,S)\in \CP_0(X)\times \CP_0(X): R \subseteq E[S] \text{ and } S \subseteq E[R] \big\}.
\end{equation}
The coarse structure  $\check \CE$ on $\CP_0(X)$ generated by $\{\check E: E\in \CI^\CE \}$ is called the \emph{Hausdorff coarse structure associated with $\CE$}. 
\end{defn}

\medskip

Notice that if $\CB$ is a base for $\CE$, then $\{\check B: B\in \CB_\mathrm{s} \}$ (see Lemma \ref{lem:meet-clo-tot-ord}) is a base for $\check \CE$. 

\medskip 

By Theorem \ref{thm:coarse metric}(b),  both $\CE$ and $\check \CE$ are defined by coarse metrics.
It is natural to ask the relation between metrics defining $\CE$ and those defining $\check \CE$. 
In the case when $\CI$ is meet-complete, one natural guess is the following \emph{Hausdorff semi-metric} associated with a coarse $\CI$-metric $\fd$: 
\begin{equation}\label{eqt:defn-check-d}
\check \fd(R,S):= \inf \Big\{\alpha\in \CI: R \subseteq {\bigcup}_{s\in S} \FD(s, \alpha) \text{ and } S \subseteq {\bigcup}_{r\in R} \FD(r, \alpha) \Big\} \qquad (R,S\in \CP_0(X));
\end{equation}
again, we use the convention that $\inf \emptyset = \infty$. 
Two natural questions are: 
\begin{enumerate}
	\item is $\check \fd$ actually a coarse $\CI$-metric? 
	
	\item does the coarse structure defined by $\check \fd$ coincide with $\check \CE_\fd$? 
\end{enumerate}
We doubt if these two questions have positive answers in gerenal. 
However, we will consider a situation when they do.

\medskip

\begin{prop}\label{prop:Hausd-coar-met}
Let $\CI$ be a meet-complete totally ordered set, and $\fd$ is a coarse $\CI$-metric on a set $X$. 

\smnoind
(a) $\check \fd$ is a coarse $\CI$-metric on $\CP_0(X)$. 

\smnoind
(b) The coarse structure induced by $\check \fd$ is precisely $\check \CE_\fd$. 
\end{prop}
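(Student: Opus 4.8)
The plan is to reduce everything to the behaviour of the up-set
$$A(R,S) := \big\{\alpha\in\CI : R\subseteq\FD_\alpha[S] \text{ and } S\subseteq\FD_\alpha[R]\big\} \qquad (R,S\in\CP_0(X)).$$
Since $\fd$ is a semi-$\CI$-metric, each $\FD_\alpha$ is symmetric and contains $\Delta_X$, so $A(R,S)$ is symmetric in its two variables and upward closed; moreover $\FD_\alpha[S]=\bigcup_{s\in S}\FD(s,\alpha)$, so $\check\fd(R,S)=\inf A(R,S)$ (with $\inf\emptyset=\infty$), the infimum existing because $\CI$ is meet-complete, and $\check{\FD_\alpha}$ — the set of \eqref{eqt:def-check-E} for $E=\FD_\alpha\in\CI^{\CE_\fd}$ — is exactly $\{(R,S):\alpha\in A(R,S)\}$. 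The one structural fact I would isolate first, and the only place total order (not merely meet-completeness) is used, is: \emph{if $\beta\in\CI$ and $\check\fd(R,S)<\beta$, then $\beta\in A(R,S)$}, because then $\beta$ is not a lower bound of the non-empty set $A(R,S)$, so some $\alpha\in A(R,S)$ has $\alpha<\beta$, and upward closedness gives $\beta\in A(R,S)$.

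For part (a), the semi-$\CI$-metric axioms are immediate: $\check\fd$ is manifestly symmetric, and $\check\fd(R,R)=0_\CI$ because $\Delta_X\subseteq\FD_{0_\CI}$ forces $0_\CI\in A(R,R)$. For the growth condition I would fix a map $\Phi:\CI\to\CI$ as in Definition \ref{defn:coarse-met}(a) for $\fd$ and set
$$\check\Phi(\alpha):=\begin{cases}\inf\{\Phi(\beta):\beta\in\CI,\ \alpha<\beta\} & \text{if some }\beta\in\CI\text{ satisfies }\alpha<\beta,\\ \Phi(\alpha) & \text{otherwise,}\end{cases}$$
which lands in $\CI$ by meet-completeness. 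Suppose $\check\fd(R,S)\leq\alpha$ and $\check\fd(S,T)\leq\alpha$. In the first case, for every $\beta>\alpha$ the structural fact gives $\beta\in A(R,S)\cap A(S,T)$, whence $R\subseteq\FD_\beta[S]\subseteq(\FD_\beta\circ\FD_\beta)[T]\subseteq\FD_{\Phi(\beta)}[T]$ and, symmetrically, $T\subseteq\FD_{\Phi(\beta)}[R]$; hence $\Phi(\beta)\in A(R,T)$, so $A(R,T)\neq\emptyset$ and $\check\fd(R,T)\leq\Phi(\beta)$, and taking the infimum over all such $\beta$ yields $\check\fd(R,T)\leq\check\Phi(\alpha)$. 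In the remaining case $\alpha$ is the largest element of $\CI$, the non-empty upward closed sets $A(R,S)$ and $A(S,T)$ both contain $\alpha$, and the same computation with $\beta=\alpha$ gives $\check\fd(R,T)\leq\Phi(\alpha)=\check\Phi(\alpha)$. Thus $\check\fd$ is a coarse $\CI$-metric.

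For part (b), I would compare bases. By Theorem \ref{thm:coarse metric}(a) applied to the coarse $\CI$-metric $\check\fd$, the sets $\{(R,S):\check\fd(R,S)\leq\alpha\}$, $\alpha\in\CI$, form a base for the coarse structure $\CE_{\check\fd}$ induced by $\check\fd$; and since $\CB_\fd\cup\{\FD_{0_\CI}\}$ is a base for $\CE_\fd$ whose symmetrization $\CB_{\mathrm s}$ (Lemma \ref{lem:meet-clo-tot-ord}) equals $\{\FD_\alpha:\alpha\in\CI\}$ — each $\FD_\alpha$ being symmetric and containing $\Delta_X$ — the remark following Definition \ref{defn:Hausd-coar-str} shows that $\{\check{\FD_\alpha}:\alpha\in\CI\}$ is a base for $\check\CE_\fd$. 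One inclusion is trivial: $\alpha\in A(R,S)$ forces $\check\fd(R,S)\leq\alpha$, so $\check{\FD_\alpha}\subseteq\{(R,S):\check\fd(R,S)\leq\alpha\}$ and $\check\CE_\fd\subseteq\CE_{\check\fd}$. For the reverse, fix $\alpha\in\CI$: if some $\beta>\alpha$ exists, then $\check\fd(R,S)\leq\alpha$ implies $\check\fd(R,S)<\beta$, hence $\beta\in A(R,S)$ by the structural fact, so $\{(R,S):\check\fd(R,S)\leq\alpha\}\subseteq\check{\FD_\beta}$; otherwise $\alpha$ is the greatest element of $\CI$, and $\check\fd(R,S)\leq\alpha$ forces $A(R,S)\neq\emptyset$, hence $\alpha\in A(R,S)$ and $\{(R,S):\check\fd(R,S)\leq\alpha\}\subseteq\check{\FD_\alpha}$. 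In either case every basic controlled set of $\CE_{\check\fd}$ lies inside one of $\check\CE_\fd$, so $\CE_{\check\fd}\subseteq\check\CE_\fd$ and the two coarse structures coincide.

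The step I expect to be the main obstacle is coping with the fact that the infimum defining $\check\fd(R,S)$ need not be attained: if it always were, one could take $\check\Phi=\Phi$ and argue with $\alpha$ directly, but in general one is forced to pass to radii $\beta>\alpha$ and to split off the case in which $\CI$ has a largest element. This is exactly where the hypothesis that $\CI$ is totally ordered — rather than merely meet-complete — does genuine work, consistently with the fact that $\check\fd$ again takes values in a totally ordered set.
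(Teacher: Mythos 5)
Your proposal is correct and follows essentially the same route as the paper: the ``structural fact'' you isolate (that $\check\fd(R,S)\lneq\beta$ forces $\beta\in A(R,S)$, via total order and upward closedness) is exactly the step the paper performs with its sets $\CI(R,S)$ and the chosen elements $\gamma',\gamma''\leq\beta(\alpha)$, and your case split on whether $\CI$ has a largest element, your modified map $\check\Phi$, and your base comparison in part (b) all mirror the paper's argument. The only cosmetic difference is that you take $\check\Phi(\alpha)=\inf\{\Phi(\beta):\beta\gneq\alpha\}$ rather than fixing a single $\beta(\alpha)\gneq\alpha$, which lets you dispense with the monotonicity of $\Phi$ that the paper invokes from Remark \ref{rem:coarse-met}(b).
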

\begin{proof}
(a) It is clear that 
$\check \fd(R,R) = 0_\CI$ and $\check \fd(R,S) = \check \fd(S,R)$ for $R,S\in \CP_0(X)$.  
By Remark \ref{rem:coarse-met}(b), there is an increasing map $\Phi:\CI\to \CI$ satisfying the requirement in Definition \ref{defn:coarse-met}(a). 
Consider $\alpha\in \CI$. 
When $\alpha$ is the largest element of $\CI$ (if it exists), then we set $\ti \Phi(\alpha) := \alpha$. 
When $\alpha$ is not the largest element of $\CI$, we fix an element $\beta(\alpha)\in \CI$ with $\alpha\lneq \beta(\alpha)$ and set $\ti \Phi(\alpha) := \Phi(\beta(\alpha))$. 
Let $R,S,T\in \CP_0(X)$. 
Consider 
$$\CI(R,S):= \Big\{\delta\in \CI: R \subseteq {\bigcup}_{s\in S} \FD(s, \delta) \text{ and } S \subseteq {\bigcup}_{r\in R} \FD(r, \delta) \Big\}.$$
Assume that $\check \fd(R,S)\leq \alpha$ and $\check \fd(S,T)\leq \alpha$. 
Since $\alpha \in \CI$, we know that $\CI(R,S)\neq \emptyset$ and $\CI(S,T)\neq \emptyset$. 
If $\alpha$ is the largest element of $\CI$, then obviously, $\check \fd(R,T)\leq \alpha = \ti \Phi(\alpha)$. 
Otherwise, since $\check \fd(R,S)\lneq \beta(\alpha)$, there exists $\gamma'\in \CI(R,S)$ with $\gamma'\leq \beta(\alpha)$ (because $\CI$ is totally ordered). 
Similarly, one can find $\gamma''\in \CI(S,T)$ with $\gamma''\leq \beta(\alpha)$. 
Hence, if $\gamma:= \max\{\gamma',\gamma'' \}$, then $R \subseteq {\bigcup}_{t\in T} \FD(t, \Phi(\gamma))$ and $T \subseteq {\bigcup}_{r\in R} \FD(r, \Phi(\gamma))$. 
This implies that $\check \fd(R,T)\leq \Phi(\gamma) \leq \ti \Phi(\alpha)$, because $\Phi$ is increasing. 

\smnoind
(b) As said in the above, $\big\{\check \FD_{\alpha}: \alpha\in \CI \big\}$ is a base for $\check \CE_\fd$, where $\check \FD_{\alpha}$ is defined as in \eqref{eqt:def-check-E}. 
Fix $\alpha_0\in \CI$.
When $\alpha_0$ is the largest element in $\CI$ (if it exists), one has 
\begin{align*}
\check \FD_{\alpha_0} & = \Big\{(R,S)\in \CP_0(X)\times \CP_0(X): R \subseteq {\bigcup}_{s\in S} \FD(s, \beta) \text{ and } S \subseteq {\bigcup}_{r\in R} \FD(r, \beta), \text{ for some }\beta\in \CI \Big\}\\
& = \{(R,S) \in \CP_0(X)\times \CP_0(X):  \check \fd(R,S)\leq \alpha_0 \}. 
\end{align*}
Assume that $\alpha_0$ is not the largest element in $\CI$. 
Choose any $\beta\in \CI$ with $\alpha_0\lneq \beta$. 
If $R,S\in \CP_0(X)$ satisfying $\check \fd(R,S)\leq \alpha_0$, then,  as in the argument of part (a), one can find $\gamma\in \CI(R,S)$ with $\gamma\leq \beta$. 
From this, we know that $(R,S)\in \check \FD_\beta$. 
Conversely, if $(R,S)\in \check \FD_{\alpha_0}$, then it is clear that $\check \fd(R,S)\leq \alpha_0$. 
\end{proof}

\medskip

We end this section with the following result concerning the case when $\CE$ has a totally ordered base. 
Note that part (a)  of this result comes from Lemma \ref{lem:meet-clo-tot-ord}, as well as  parts (a) and (c) of Theorem \ref{thm:coarse metric}. 
Part (b) is a corollary of Proposition \ref{prop:Hausd-coar-met} and Theorem \ref{thm:coarse metric}(c).

\medskip

\begin{cor}\label{cor:Haus-met}
Let $\CE$ be a coarse structure on a set $X$. 
	
\smnoind
(a) $\CE$ has a totally ordered base $\CB$ if and only if there is a meet-complete totally ordered set $\CI$ and a coarse $\CI$-metric $\fd$ with $\CE = \CE_\fd$. 
In this case, $(\CI, \fd)$ can be chosen to be $(\overline{\CB}, \fd^\CB)$ (see Theorem \ref{thm:coarse metric}(c)). 

\smnoind
(b) Suppose that $\CE$ admits a totally ordered base $\CB$. 
Then $\check \fd^\CB$ is a coarse $\overline{\CB}$-metric on $\CP_0(X)$, and the coarse structure induced by $\check \fd^\CB$ is precisely the Hausdorff coarse structure associated with $\CE$. 
\end{cor}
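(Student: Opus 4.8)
The plan is to assemble this corollary from results already in hand, essentially following the bookkeeping indicated just after the statement.

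For part (a) I would treat the two implications separately. For the backward direction, suppose $\CI$ is a meet-complete totally ordered set and $\fd$ a coarse $\CI$-metric with $\CE = \CE_\fd$; then Theorem \ref{thm:coarse metric}(a) tells us that $\CB_\fd \cup \{\FD_{0_\CI}\}$ is a base for $\CE$, and I would observe that this base is totally ordered by inclusion: the map $\alpha \mapsto \FD_\alpha$ is order preserving, $\CI$ is totally ordered, and $0_\CI$ is the least element of $\CI$, so $\FD_{0_\CI}$ sits below every $\FD_\alpha$. For the forward direction, starting from a totally ordered base $\CB$ for $\CE$, I would invoke Lemma \ref{lem:meet-clo-tot-ord} with $\CT = \CB$ to get that $\overline{\CB}$ is closed under arbitrary intersections and is totally ordered; closure under arbitrary intersections makes $(\overline{\CB}, \subseteq)$ meet-complete, since the greatest lower bound of a non-empty subfamily is its intersection, which again lies in $\overline{\CB}$. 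Then Theorem \ref{thm:coarse metric}(c) supplies the coarse $\overline{\CB}$-metric $\fd^\CB$ with $\CE = \CE_{\fd^\CB}$, so $(\overline{\CB}, \fd^\CB)$ is a witness for the right-hand side, and this is exactly the choice named in the last sentence of (a).

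For part (b), assuming $\CB$ is a totally ordered base for $\CE$, I would feed the output of (a) --- namely that $\overline{\CB}$ is a meet-complete totally ordered set carrying the coarse $\overline{\CB}$-metric $\fd^\CB$ with $\CE = \CE_{\fd^\CB}$ --- into Proposition \ref{prop:Hausd-coar-met}, taking $\overline{\CB}$ for $\CI$ and $\fd^\CB$ for $\fd$. Part (a) of that proposition then gives that $\check \fd^\CB$ is a coarse $\overline{\CB}$-metric on $\CP_0(X)$, and part (b) gives that the coarse structure it induces equals $\check \CE_{\fd^\CB}$, which coincides with $\check \CE$ because $\CE = \CE_{\fd^\CB}$, and $\check \CE$ is by definition the Hausdorff coarse structure associated with $\CE$.

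There is no real obstacle here; the whole proof is citation and combination. The single point that deserves a sentence of verification is that $\overline{\CB}$ is meet-complete as an abstract partially ordered set (as opposed to merely being closed under intersections inside $\CP_0(X\times X)$), so that the hypotheses of Proposition \ref{prop:Hausd-coar-met} are literally met --- and that is immediate from the first assertion of Lemma \ref{lem:meet-clo-tot-ord}.
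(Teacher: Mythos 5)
Your proposal is correct and follows essentially the same route as the paper, which likewise derives part (a) from Lemma \ref{lem:meet-clo-tot-ord} together with parts (a) and (c) of Theorem \ref{thm:coarse metric}, and part (b) from Proposition \ref{prop:Hausd-coar-met} and Theorem \ref{thm:coarse metric}(c). The extra verifications you spell out (total ordering of $\CB_\fd \cup \{\FD_{0_\CI}\}$ via the order-preserving map $\alpha\mapsto \FD_\alpha$, and meet-completeness of $\overline{\CB}$ from closure under intersections) are exactly the implicit steps the paper leaves to the reader.
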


\medskip

\section{Uniform $\CI$-metric}

\medskip

In this section, we consider the uniform structure that comes from some form of metric. 
Let us recall that a \emph{uniform structure} on a set $X$ is a subcollection $\CU\subseteq \CP_0(X\times X)$ such that for every $U,V\in \CU$ and $S\in \CP_0(X\times X)$ with $U\subseteq S$, one has $\Delta_X\subseteq U$, $U\cap V,U^{-1},S\in \CU$ and there exists $W\in \CU$ with $W\circ W\subseteq U$.  
A subcollection $\CB\subseteq \CU$ is called a \emph{base} for $\CU$ if for every $U\in \CU$, there exist $V\in \CB$ with $V\subseteq U$. 

\medskip

\begin{defn}\label{defn:unif-met}
(a) If $\CI$ is a partially ordered set with a zero such that $\CI\setminus \{0_\CI\}$ is non-empty and is downward  directed, then we say that $\CI$ is a \emph{D-index set}. 

\smnoind
(b) Suppose that $\CI$ is a D-index set and $\fd$ is a semi-$\CI$-metric. 
If there is a function $\Psi: \CI\setminus \{0_\CI\}\to \CI\setminus \{0_\CI\}$ such that for any $\beta\in \CI\setminus \{0_\CI\}$, one has $\fd(x,z)\leq \beta$ whenever $x,y,z\in X$ satisfying $\fd(x,y)\leq \Psi(\beta)$ and $\fd(y,z)\leq \Psi(\beta)$; i.e,
$$\FD_{\Psi(\beta)}\circ \FD_{\Psi(\beta)}\subseteq \FD_{\beta} ,$$
then $\fd$ is called a \emph{pseudo uniform-$\CI$-metric}. 

\smnoind
(c) A \emph{uniform $\CI$-metric} is a pseudo uniform-$\CI$-metric $\fd$ satisfying 
$\bigcap_{\alpha\in \CI\setminus \{0_\CI\}} \FD_\alpha = \Delta_X$. 
\end{defn}

\medskip

\begin{rem}\label{rem:no-atom}
If $\CI\setminus  \{0_\CI\}$ does not have a smallest element, then $\inf \CI\setminus \{0_\CI\}$ exists and equals $0_\CI$. 
In this case, a pseudo uniform $\CI$-metric is a uniform $\CI$-metric if and only if the relation $\fd(x,y) = 0_\CI$ implies $x = y$. 
\end{rem}

\medskip

The following result is more or less trivial. 

\medskip

\begin{prop}\label{prop:unif-metric}
Let $X$ be a set, and  $\CI$ be a D-index set. 
Suppose that $\fd$ is a pseudo uniform $\CI$-metric on $X$.
Then $\CB_\fd$ (see Definition \ref{defn:gen-met}(a)) is a base for a uniform structure $\CU_\fd$ on $X$. 
If, in addition, $\fd$ is a uniform $\CI$-metric, then the topology induced by $\CU_\fd$ is Hausdorff. 
\end{prop}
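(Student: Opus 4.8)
The plan is to verify directly that $\CB_\fd$ satisfies the standard axioms for a base of a uniform structure: it should be a non-empty filter base of subsets of $X\times X$, each member of which contains $\Delta_X$, each member of which contains the inverse of some member, and such that for every member $B$ there is a member $B'$ with $B'\circ B'\subseteq B$. Granting that, $\CU_\fd:=\{U\subseteq X\times X:\FD_\alpha\subseteq U\text{ for some }\alpha\in\CI\setminus\{0_\CI\}\}$ is the uniform structure it generates.

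First I would note that $\CB_\fd\neq\emptyset$, since $\CI$ being a D-index set means $\CI\setminus\{0_\CI\}$ is non-empty. Axiom (D1) gives $\fd(x,x)=0_\CI\leq\alpha$ for every $\alpha\in\CI\setminus\{0_\CI\}$, hence $\Delta_X\subseteq\FD_\alpha$ for all such $\alpha$. For the filter-base property, given $\alpha,\beta\in\CI\setminus\{0_\CI\}$, the downward directedness of $\CI\setminus\{0_\CI\}$ (not of all of $\CI$ --- this is the point where that hypothesis is used) produces $\gamma\in\CI\setminus\{0_\CI\}$ with $\gamma\leq\alpha$ and $\gamma\leq\beta$, whence $\FD_\gamma\subseteq\FD_\alpha\cap\FD_\beta$. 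The symmetry axiom (D2) gives $\FD_\alpha^{-1}=\FD_\alpha$, so the inverse requirement is automatic. Finally, the defining property of a pseudo uniform $\CI$-metric supplies, for each $\beta\in\CI\setminus\{0_\CI\}$, an element $\Psi(\beta)\in\CI\setminus\{0_\CI\}$ with $\FD_{\Psi(\beta)}\circ\FD_{\Psi(\beta)}\subseteq\FD_\beta$, which is the remaining axiom. This establishes the first assertion.

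For the Hausdorff statement, I would invoke the standard fact that the topology of a uniform space $(X,\CU)$ is Hausdorff if and only if $\bigcap\CU=\Delta_X$. Since $\CB_\fd$ is a base for $\CU_\fd$, one has $\bigcap\CU_\fd=\bigcap_{\alpha\in\CI\setminus\{0_\CI\}}\FD_\alpha$: indeed $\bigcap\CU_\fd\subseteq\FD_\alpha$ because each $\FD_\alpha\in\CU_\fd$, while conversely each $U\in\CU_\fd$ contains some $\FD_\alpha$ and hence contains $\bigcap_\alpha\FD_\alpha$. The uniform $\CI$-metric hypothesis says exactly that $\bigcap_{\alpha\in\CI\setminus\{0_\CI\}}\FD_\alpha=\Delta_X$, so $\bigcap\CU_\fd=\Delta_X$ and the induced topology is Hausdorff.

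I do not expect a genuine obstacle here, as the whole argument is an unwinding of definitions; the only two places calling for a moment's attention are that the inverse axiom comes for free from the symmetry of $\fd$, and that it is the directedness of $\CI\setminus\{0_\CI\}$ downward (rather than upward, as in the coarse case) that yields the filter-base condition.
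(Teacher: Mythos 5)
Your verification is correct and is exactly the routine unwinding of definitions that the paper has in mind (it states the result without proof, calling it ``more or less trivial''). All the axioms are checked at the right level of generality --- in particular you correctly isolate the downward directedness of $\CI\setminus\{0_\CI\}$ as the source of the filter-base property and the condition $\bigcap_{\alpha\in\CI\setminus\{0_\CI\}}\FD_\alpha=\Delta_X$ as the source of the Hausdorff property --- so there is nothing to add.
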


\medskip

We say that a uniform structure $\CU$ is \emph{trivial} if it is a principal filter (i.e. there exists $U_0\in \CU$ with $\CU = \{U\subseteq X\times X: U_0\subseteq U \}$);
otherwise, $\CU$ is said to be \emph{non-trivial}.  
It is clear that $\CU$ is non-trivial if and only if 
$$0_\CU := \bigcap \CU \notin \CU.$$
If $\CU$ is a trivial uniform structure and we define $\fd:X\times X\to \BR^+$ by 
$$\fd(x,y) := \begin{cases}
0 & \text{ when } (x,y)\in 0_\CU\\
1 & \text{ otherwise,}
\end{cases}$$
then $\CU = \CU_\fd$ (because $\bigcap \CU = \FD_{1/2}$). 

\medskip

\begin{lem}\label{lem:base-closed-intersect}
Let $\CU$ be a non-trivial uniform structure. 
Suppose that $\CA$ is a base for $\CU$ satisfying:
\begin{equation*}
\bigcap \{A\in \CA: (x,y)\in A \} \in \CU, \quad \text{for every }(x,y)\in X\times X\setminus 0_\CU.
\end{equation*}
If $\CB := \overline{\CA}\setminus \{0_\CU\}$ (see Lemma \ref{lem:meet-clo-tot-ord}), then $\CB$ is a base for $\CU$. 
\end{lem}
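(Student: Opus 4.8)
The plan is to verify the two defining conditions of a base for $\CU$ in turn: (i) $\CB\subseteq\CU$, and (ii) every $U\in\CU$ contains some member of $\CB$. Condition (ii) will be nearly immediate, while (i) is where the hypothesis on $\CA$ actually gets used.

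For (ii), I would first record that $\CA_\mathrm{s}\subseteq\CB$. Each $A\in\CA$ belongs to $\CU$ (as $\CA$ is a base), so $(A\cap A^{-1})\cup\Delta_X\in\CU$ by the inverse-, intersection- and superset-closure of a uniform structure; since $\CU$ is non-trivial we have $0_\CU\notin\CU$, so $(A\cap A^{-1})\cup\Delta_X\neq 0_\CU$, and as this set is the intersection of the one-element subfamily $\{(A\cap A^{-1})\cup\Delta_X\}$ of $\CA_\mathrm{s}$ it lies in $\overline{\CA}$, hence in $\CB$. In particular $\CA_\mathrm{s}\subseteq\CU$. Now, given $U\in\CU$, choose $A\in\CA$ with $A\subseteq U$ (possible since $\CA$ is a base); then $(A\cap A^{-1})\cup\Delta_X\in\CB$ is contained in $U$, which is (ii).

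For (i), fix $V\in\CB$. By construction $V=\bigcap\CS$ for some non-empty $\CS\subseteq\CA_\mathrm{s}$, and $V\neq 0_\CU$. Since $\CS\subseteq\CA_\mathrm{s}\subseteq\CU$, we have $0_\CU=\bigcap\CU\subseteq\bigcap\CS=V$; together with $V\neq 0_\CU$ this yields a point $(x,y)\in V\setminus 0_\CU$, and in particular $x\neq y$ because $\Delta_X\subseteq 0_\CU$. As $\CU$ is inverse-closed, $0_\CU$ is symmetric, so $(y,x)\notin 0_\CU$ as well, and the hypothesis on $\CA$ applies to both $(x,y)$ and $(y,x)$: setting $W:=\bigcap\{A\in\CA:(x,y)\in A\}$ and $W':=\bigcap\{A\in\CA:(y,x)\in A\}$, both $W$ and $W'$ lie in $\CU$, hence so does $V_0:=W\cap W^{-1}\cap W'\cap (W')^{-1}$. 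The crucial step is then $V_0\subseteq V$: for an arbitrary $B\in\CS$, write $B=(A\cap A^{-1})\cup\Delta_X$ with $A\in\CA$; since $(x,y)\in V\subseteq B$ and $(x,y)\notin\Delta_X$, we get $(x,y)\in A$ and $(y,x)\in A$, whence $W\subseteq A$ and $W'\subseteq A$, so $V_0\subseteq W\cap (W')^{-1}\subseteq A\cap A^{-1}\subseteq B$. As $B$ was arbitrary, $V_0\subseteq\bigcap\CS=V$, and since $V_0\in\CU$ and $\CU$ is closed under supersets, $V\in\CU$. This proves (i).

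The only genuine obstacle here is conceptual rather than computational: a uniform structure is merely a filter, so it is \emph{not} closed under arbitrary intersections, and one cannot conclude $V=\bigcap\CS\in\CU$ just from $\CS\subseteq\CU$. The hypothesis on $\CA$ is precisely the device that circumvents this — it lets a single witness point $(x,y)\in V\setminus 0_\CU$ manufacture one honest entourage $V_0\in\CU$ that is squeezed below every $B\in\CS$ simultaneously. Everything else — the bookkeeping with $A^{-1}$ and $\Delta_X$, the symmetry of $0_\CU$, and unwinding the definition of $\overline{\CA}$ from Lemma \ref{lem:meet-clo-tot-ord} — is routine.
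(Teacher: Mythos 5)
Your proof is correct and follows essentially the same route as the paper's: reduce to $\CA_\mathrm{s}$ for the cofinality condition, then for each $V=\bigcap\CS\in\CB$ pick a witness $(x,y)\in V\setminus 0_\CU$, apply the hypothesis at both $(x,y)$ and $(y,x)$, and squeeze the resulting entourage below every member of $\CS$. Your explicit $V_0=W\cap W^{-1}\cap W'\cap(W')^{-1}$ is just a concrete packaging of the paper's $\bigcap\{B\in\CA_\mathrm{s}:(x,y)\in B\}\in\CU$ step.
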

\begin{proof}
Clearly, the collection $\CA_\mathrm{s}$ as in Lemma \ref{lem:meet-clo-tot-ord} is a base for $\CU$ and we have $\CA_\mathrm{s}\subseteq \CB$ (as $0_\CU\notin \CA_\mathrm{s}$ by the assumption of non-triviality).
Therefore, it remains to show that $\CB\subseteq \CU$. 

Pick any $(x,y)\in X\times X\setminus 0_\CU$. 
As $(y,x)\notin 0_\CU$, one has 
$$\{B\in \CA_\mathrm{s}: (x,y)\in B \} = \{A\cap A^{-1}: A \in \CA; (x,y)\in A \}\cap \{A\cap A^{-1}: A \in \CA; (y,x)\in A \}.$$
This shows that 
$\bigcap \{B\in \CA_\mathrm{s}: (x,y)\in B \} \in \CU$.

Suppose now that $D\in \CB$. 
There exists $\CC\subseteq \CA_\mathrm{s}$ with $D = \bigcap \CC$. 
As $0_\CU \subsetneq D$, there exists $(x,y)\in \bigcap \CC \setminus  0_\CU$. 
It then follows from $\CC \subseteq \{B\in \CA_\mathrm{s}: (x,y)\in B \}$ that 
$$\bigcap \{B\in \CA_\mathrm{s}: (x,y)\in B \}\subseteq D$$
and the above gives $D \in \CU$. 
\end{proof}

\medskip

\begin{thm}\label{thm:unif-str=>met}
Let $\CU$ be a non-trivial uniform structure on a set $X$.  

\smnoind
(a) There exist a partially ordered set $\CI$ with a zero and a map $\fd:X\times X\to \CI_\infty$ such that 
$\CB_\fd$ (see Definition \ref{defn:gen-met}(a))
becomes a base for $\CU$ if and only if $\CU$ admits a base $\CB$ such that $\CB\cup \{0_\CU\}$ is closed under arbitrary intersections. 
In this case, one can choose $\CI$ to be the meet-complete D-index set 
$\CJ^\CB:= \CB_\mathrm{s}\cup \{0_\CU\}$ (see Lemma \ref{lem:meet-clo-tot-ord})
and $\fd$ to be a pseudo uniform $\CI$-metric. 

\smnoind
(b) If $\CU$ admits a base $\CB$ with $\CB\cup \{0_\CU\}$ being closed under arbitrary intersections and the topology induced by $\CU$ is Hausdorff, then one can find a uniform $\CJ^\CB$-metric $\fd$ with $\CU = \CU_{\fd}$. 
\end{thm}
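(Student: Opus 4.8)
The plan is to establish the two implications of part~(a) separately, and then obtain part~(b) as an immediate consequence of the construction carried out in the \(\Leftarrow\) direction of~(a).

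For the \(\Rightarrow\) direction of~(a): given a partially ordered set \(\CI\) with zero and a map \(\fd\colon X\times X\to\CI_\infty\) for which \(\CB_\fd\) is a base of \(\CU\), I would apply Lemma~\ref{lem:base-closed-intersect} with \(\CA:=\CB_\fd\). Since \(\fd\) is a semi-\(\CI\)-metric, each \(\FD_\alpha\) is symmetric and contains \(\Delta_X\), so \((\CB_\fd)_\mathrm{s}=\CB_\fd\) and \(\overline{\CB_\fd}\) is precisely the family of all intersections of non-empty subfamilies of \(\CB_\fd\); moreover \(\bigcap\CB_\fd=\bigcap\CU=0_\CU\). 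The point to check is that \(\bigcap\{A\in\CB_\fd:(x,y)\in A\}\in\CU\) for each \((x,y)\notin 0_\CU\). Put \(\beta:=\fd(x,y)\); then \(\beta\neq 0_\CI\), for otherwise \((x,y)\) would lie in every \(\FD_\alpha\), hence in \(0_\CU\). If \(\beta=\infty\) the family is empty and its intersection is \(X\times X\in\CU\); if \(\beta\in\CI\setminus\{0_\CI\}\), then \(\{A\in\CB_\fd:(x,y)\in A\}=\{\FD_\alpha:\alpha\geq\beta\}\), and since \(\alpha\geq\beta\) forces \(\FD_\beta\subseteq\FD_\alpha\), the smallest member of this family is \(\FD_\beta\in\CB_\fd\subseteq\CU\), so the intersection equals \(\FD_\beta\in\CU\). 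Lemma~\ref{lem:base-closed-intersect} then gives that \(\CB:=\overline{\CB_\fd}\setminus\{0_\CU\}\) is a base of \(\CU\); and as every member of \(\overline{\CB_\fd}\) contains \(\bigcap\CB_\fd=0_\CU\), the family \(\CB\cup\{0_\CU\}=\overline{\CB_\fd}\cup\{0_\CU\}\) is closed under arbitrary intersections (using Lemma~\ref{lem:meet-clo-tot-ord} for \(\overline{\CB_\fd}\)).

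For the \(\Leftarrow\) direction of~(a): starting from a base \(\CB\) of \(\CU\) with \(\CB\cup\{0_\CU\}\) closed under arbitrary intersections, I would first pass to the symmetric base \(\CB_\mathrm{s}\) and check, by distributing intersections over the common term \(\Delta_X\), that \(\CJ^\CB:=\CB_\mathrm{s}\cup\{0_\CU\}\) is again closed under arbitrary intersections. Ordered by inclusion with \(0_\CU\) as zero, \(\CJ^\CB\) is then a meet-complete poset; and since \(\CU\) is non-trivial we have \(0_\CU\notin\CU\supseteq\CB_\mathrm{s}\neq\emptyset\), while \(\CB_\mathrm{s}\) is downward directed because \(\CU\) is a filter with base \(\CB_\mathrm{s}\), so \(\CJ^\CB\) is a D-index set. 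I would then define \(\fd(x,y):=\bigcap\{C\in\CJ^\CB:(x,y)\in C\}\), with the convention \(\bigcap\emptyset:=\infty\); closedness of \(\CJ^\CB\) under arbitrary intersections makes \(\fd\) take values in \(\CJ^\CB_\infty\), and symmetry of the members of \(\CJ^\CB\) together with \(\Delta_X\subseteq 0_\CU\) gives (D1) and (D2). The crucial computation is \(\FD_B=B\) for every \(B\in\CB_\mathrm{s}\): if \((x,y)\in B\) then \(B\) is one of the sets being intersected so \(\fd(x,y)\subseteq B\); if \((x,y)\notin B\) then either the defining family is empty, whence \(\fd(x,y)=\infty\), or \((x,y)\in\fd(x,y)\), and in both cases \(\fd(x,y)\nsubseteq B\). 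Consequently \(\CB_\fd=\{\FD_B:B\in\CB_\mathrm{s}\}=\CB_\mathrm{s}\) is a base of \(\CU\), and \(\fd\) is a pseudo uniform \(\CJ^\CB\)-metric, the map \(\Psi\) being obtained for \(B\in\CB_\mathrm{s}\) by choosing \(W\in\CU\) with \(W\circ W\subseteq B\) and then \(B'\in\CB_\mathrm{s}\) with \(B'\subseteq W\), so that \(\FD_{\Psi(B)}\circ\FD_{\Psi(B)}=B'\circ B'\subseteq B=\FD_B\).

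Part~(b) then follows at once from this construction. The \(\fd\) built above satisfies \(\CU=\CU_\fd\), since its base \(\CB_\fd=\CB_\mathrm{s}\) is a base of \(\CU\), and \(\bigcap_{\alpha\in\CJ^\CB\setminus\{0_\CU\}}\FD_\alpha=\bigcap\CB_\mathrm{s}=\bigcap\CU=0_\CU\); recalling that the uniform topology of \(\CU\) is Hausdorff exactly when \(\bigcap\CU=\Delta_X\), the Hausdorff hypothesis forces this intersection to be \(\Delta_X\), i.e.\ \(\fd\) is a uniform \(\CJ^\CB\)-metric. The main obstacle I anticipate is the \(\Rightarrow\) direction of~(a): one has to recognize that Lemma~\ref{lem:base-closed-intersect} is the right vehicle and, in verifying its hypothesis, that the ball \(\FD_{\fd(x,y)}\) is the smallest basic set containing \((x,y)\), which is exactly what drags the relevant intersection back into \(\CU\). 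The remaining arguments are routine manipulations with filters and the lemmas already proved.
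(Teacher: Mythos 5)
Your proof is correct and follows essentially the same route as the paper: the forward direction verifies the hypothesis of Lemma \ref{lem:base-closed-intersect} by observing that $\FD_{\fd(x,y)}$ is the smallest element of $\CB_\fd$ containing $(x,y)$, and the converse direction uses the same metric $\fd(x,y)=\bigcap\{B\in\CB_\mathrm{s}:(x,y)\in B\}$ with the same key identity $\FD_B=B$. One cosmetic slip: in the forward direction the theorem imposes no requirement on the map $\fd$, so you may not assume it is a semi-$\CI$-metric; this is harmless, however, since your verification of the lemma's hypothesis never uses symmetry and the symmetrization is handled internally by Lemmas \ref{lem:meet-clo-tot-ord} and \ref{lem:base-closed-intersect}.
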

\begin{proof}
(a) Suppose that such a map $\fd$ exists. 
For any $(x,y)\in X\times X \setminus 0_\CU$, as $0_\CU = \bigcap \CB_\fd$, 
we have 
\begin{equation*}\label{eqt:D-subset-cap}
\fd(x,y)\neq 0_\CI \quad \text{and} \quad \FD_{\fd(x,y)}\subseteq \bigcap \{B\in \CB_\fd: (x,y)\in B \}.  
\end{equation*}
Thus, one can apply Lemma \ref{lem:base-closed-intersect} to conclude that $\CB:= \overline{\CB_\fd} \setminus \{0_\CU\}$ is a base for $\CU$. 
Moreover, we know from  Lemma \ref{lem:meet-clo-tot-ord} that $\CB \cup \{0_\CU\} = \overline{\CB_\fd}$ is closed under arbitrary intersections.

Conversely, suppose that such a base $\CB$ exists.  
Then $\CB_\mathrm{s}$ is a base of $\CU$ such that $\CB_\mathrm{s}\cup \{0_\CU\}$ is closed under arbitrary intersections.
As $\CU$ is non-trivial, $\CJ^\CB\setminus \{0_\CU\} =\CB_\mathrm{s}$ is downward directed. 
Define 
\begin{equation*}\label{eqt:def-fd-CB}
\fd_\CB(x,y) := \bigcap \{B\in \CB_\mathrm{s}: (x,y)\in B \}\in \CJ^\CB_\infty \qquad (x,y\in X)
\end{equation*}
(we again use the convention that $\bigcap \emptyset := \infty$). 
It is clear that $\fd_\CB$ is a semi-$\CJ^\CB$-metric (observe that $\bigcap \CB_\mathrm{s} = 0_\CU$). 
Moreover,  when $S\in \CJ^\CB$ and $x,y\in X$, one has $\fd_\CB(x,y) \leq S$ if and only if $(x,y)\in S$. 
Thus,
\begin{equation}\label{eqt:S=D}
\{(x,y)\in X\times X: \fd_\CB(x,y)\leq S \} = S \qquad (S\in \CJ^\CB).
\end{equation}
Consider any $S\in \CB_\mathrm{s}\subseteq \CU$. 
Pick an arbitrary $B\in \CB_\mathrm{s}$ with $B\circ B\subseteq S$. 
If $x,y,z\in X$ satisfying $\fd_\CB(x,y)\leq B$ and $\fd_\CB(y,z)\leq B$, then $(x,z)\in S$, or equivalently, $\fd_\CB(x,y)\leq S$. 
These show that $\fd_\CB$ is a pseudo uniform $\CJ^\CB$-metric.
Moreover, \eqref{eqt:S=D} tells us that $\CU =  \CU_{\fd_\CB}$. 

\smnoind
(b) It follows from \eqref{eqt:S=D} that 
$$0_\CU = {\bigcap}_{B\in \CB_\mathrm{s}} \{(x,y)\in X\times X: \fd_\CB(x,y) \leq B \}.$$ 
From this, we see that $\fd_\CB$ is a uniform $\CJ^\CB$-metric if and only if $0_\CU = \Delta_X$, or equivalently, the topology induced by $\CU$ is Hausdorff. 
\end{proof}

\medskip

It follows from the proof of part (a) above that we have one more equivalent condition of $\CU$ being defined by a pseudo uniform $\CI$-metric: $\CU$ admits a base $\CA$ satisfying the requirement in  Lemma \ref{lem:base-closed-intersect}. 

\medskip

\begin{cor}\label{cor:unif-tot-ord}
Let $\CU$ be a non-trivial uniform structure on a set $X$.  
There exist a totally ordered set $\CI$ with a zero and a pseudo uniform $\CI$-metric $\fd$ on $X$ with $\CU = \CU_{\fd}$ if and only if there is a totally ordered base $\CA$ of $\CU$. 
In the case, we can take $\CI= \overline{\CA}$. 
\end{cor}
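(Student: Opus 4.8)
The plan is to derive Corollary \ref{cor:unif-tot-ord} from Theorem \ref{thm:unif-str=>met}(a) together with Lemma \ref{lem:meet-clo-tot-ord}, exactly as the surrounding discussion suggests. For the backward direction, suppose $\CA$ is a totally ordered base of $\CU$. By Lemma \ref{lem:meet-clo-tot-ord}, the collection $\overline{\CA}$ is totally ordered and closed under arbitrary intersections; moreover $\CA_\mathrm{s}$ is still a base for $\CU$ (it is cofinal downward in $\CU$ since every $A\in\CA$ contains $(A\cap A^{-1})\cup\Delta_X$, and the latter belongs to $\CU$). Then $\CB:=\overline{\CA}\setminus\{0_\CU\}$ is a base for $\CU$ and $\CB\cup\{0_\CU\}=\overline{\CA}$ is closed under arbitrary intersections, so by Theorem \ref{thm:unif-str=>met}(a) there is a pseudo uniform $\CJ^\CB$-metric $\fd$ with $\CU=\CU_\fd$; here $\CJ^\CB=\CB_\mathrm{s}\cup\{0_\CU\}$, which is totally ordered because $\CB$ is. One small point to check: $\CB_\mathrm{s}$, the "symmetrized" version of $\CB$, coincides with $\CB$ itself since every element of $\overline{\CA}$ is already symmetric and contains $\Delta_X$; hence $\CJ^\CB=\overline{\CA}$, giving the asserted choice $\CI=\overline{\CA}$.

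For the forward direction, suppose $\fd$ is a pseudo uniform $\CI$-metric on $X$ with $\CI$ totally ordered and $\CU=\CU_\fd$. Then $\CB_\fd=\{\FD_\alpha:\alpha\in\CI\setminus\{0_\CI\}\}$ is a base for $\CU$ by Proposition \ref{prop:unif-metric}, and it is totally ordered: if $\alpha\leq\beta$ in $\CI$ then $\FD_\alpha\subseteq\FD_\beta$ directly from the definition \eqref{eqt:defn-D-alpha}. So $\CA:=\CB_\fd$ is a totally ordered base of $\CU$, which is what we wanted. (Here I am using that $\CI\setminus\{0_\CI\}$ is nonempty, part of the D-index hypothesis built into "pseudo uniform $\CI$-metric".)

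The main obstacle — really the only thing requiring care — is the bookkeeping around $0_\CU$ and the passage between $\CA$, $\CA_\mathrm{s}$, and $\overline{\CA}$: one must verify that removing $0_\CU$ from $\overline{\CA}$ leaves a genuine base (which is where non-triviality of $\CU$ is used, so that $0_\CU\notin\CA_\mathrm{s}$ and $\CA_\mathrm{s}$ survives inside $\CB$), and that the resulting index set $\CJ^\CB$ is literally $\overline{\CA}$ so the final sentence "we can take $\CI=\overline{\CA}$" is accurate. All of this is already packaged in Lemmas \ref{lem:meet-clo-tot-ord} and \ref{lem:base-closed-intersect} and the proof of Theorem \ref{thm:unif-str=>met}(a), so the corollary's proof should be just a few lines citing these, with the totally-ordered refinement of Lemma \ref{lem:meet-clo-tot-ord} doing the new work.
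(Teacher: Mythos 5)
Your route is the paper's route: forward direction via $\CB_\fd$ being a totally ordered base, backward direction via Lemma \ref{lem:meet-clo-tot-ord} and Theorem \ref{thm:unif-str=>met}(a), with the observation that $\CJ^\CB=\overline{\CA}$. But there is one step you assert without proof, and it is precisely the step where the total ordering of $\CA$ does real work beyond making the index set totally ordered. You write ``Then $\CB:=\overline{\CA}\setminus\{0_\CU\}$ is a base for $\CU$'' as if it followed from Lemma \ref{lem:meet-clo-tot-ord} together with $\CA_\mathrm{s}$ being a base. It does not: Lemma \ref{lem:meet-clo-tot-ord} only says $\overline{\CA}$ is closed under intersections and totally ordered as a family of sets; it says nothing about the intersections $\bigcap\CS$ (for infinite $\CS\subseteq\CA_\mathrm{s}$) being entourages, i.e.\ about $\CB\subseteq\CU$. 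That is the content of Lemma \ref{lem:base-closed-intersect}, which you cite only in passing, and whose hypothesis --- that $\bigcap\{A\in\CA:(x,y)\in A\}\in\CU$ for every $(x,y)\in X\times X\setminus 0_\CU$ --- must be verified and is false for a general base.

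The verification is short but is the substantive content of the backward direction: given $(x,y)\notin 0_\CU=\bigcap\CA$, pick $A_0\in\CA$ with $(x,y)\notin A_0$; for any $B\in\CA$ containing $(x,y)$ one cannot have $B\subseteq A_0$, so total ordering forces $A_0\subseteq B$; hence $A_0\subseteq\bigcap\{A\in\CA:(x,y)\in A\}$, and the latter lies in $\CU$ as a superset of $A_0\in\CU$. With this inserted, Lemma \ref{lem:base-closed-intersect} applies and the rest of your argument (including the identification $\CB_\mathrm{s}=\CB$ and $\CJ^\CB=\overline{\CA}$, and the forward direction) is correct and matches the paper.
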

\begin{proof}
Suppose that there is such a pseudo uniform $\CI$-metric $\fd$. 
Then, obviously, $\big\{\FD_\alpha: \alpha\in \CI\setminus \{0_\CI\} \big\}$ is a totally ordered base for $\CU$. 
Conversely, suppose that one can find a totally ordered base $\CA$ for $\CU$. 
Let $(x,y)\in X\times X\setminus 0_\CU$. 
As $0_\CU = \bigcap\CA$, there is $A_0\in \CA$ with $(x,y)\notin A_0$. 
If $B\in \CA$ contains $(x,y)$, then $A_0\subseteq B$ (as $\CA$ is totally ordered), and $\CA$ satisfies the hypothesis of Lemma \ref{lem:base-closed-intersect}. 
Hence, $\CB:= \overline{\CA} \setminus \{ 0_\CU\}$ is a base for $\CU$. 
Furthermore, Lemma \ref{lem:meet-clo-tot-ord} tells us that $\overline{\CA}$ is totally ordered and is closed under arbitrary intersections. 
The conclusion now follows from Theorem \ref{thm:unif-str=>met}(a). 
\end{proof}

\medskip

\section{An example}

\medskip

Before we give the example concerning valuation rings, we first consider a connection between coarse metrics and pseudo uniform metrics. 
We recall that a subset $\CS$ of a partially ordered set $\CI$ is \emph{downward cofinal} if for every $\alpha\in \CI$, there exists $\beta\in \CS$ such that $\beta\leq \alpha$. 

\medskip

The following result is more or less obvious. 

\medskip

\begin{prop}\label{prop:coarse>unif}
Let $\CI$ be upward directed set which is also a D-index set.
Suppose that $\fd$ is a coarse $\CI$-metric on $X$. 
If there exists $\Phi:\CI \to \CI$ satisfying the requirement in Definition \ref{defn:coarse-met}(a) such that $\Phi(\CI\setminus \{0_\CI\})$ is a downward  cofinal subset of $\CI\setminus \{0_\CI\}$, then $\fd$ is also a pseudo uniform $\CI$-metric. 
\end{prop}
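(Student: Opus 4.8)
The plan is to produce directly the function $\Psi$ demanded by Definition \ref{defn:unif-met}(b). Since $\fd$ is a coarse $\CI$-metric, it is in particular a semi-$\CI$-metric, and $\CI$ is a D-index set by hypothesis; so the only thing left to check is the descent condition $\FD_{\Psi(\beta)}\circ \FD_{\Psi(\beta)}\subseteq \FD_\beta$.

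First I would record the elementary monotonicity $\FD_\gamma\subseteq \FD_{\gamma'}$ whenever $\gamma\leq \gamma'$, which is immediate from \eqref{eqt:defn-D-alpha}. Next, fix the map $\Phi:\CI\to\CI$ provided by the hypothesis (it satisfies the requirement of Definition \ref{defn:coarse-met}(a) and has $\Phi(\CI\setminus\{0_\CI\})$ downward cofinal in $\CI\setminus\{0_\CI\}$). Given any $\beta\in \CI\setminus\{0_\CI\}$, downward cofinality yields an element $\alpha\in \CI\setminus\{0_\CI\}$ with $\Phi(\alpha)\leq \beta$; moreover $\Phi(\alpha)\in\CI\setminus\{0_\CI\}$ because $\Phi(\CI\setminus\{0_\CI\})\subseteq\CI\setminus\{0_\CI\}$. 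Using the axiom of choice, select one such $\alpha$ for each $\beta$ and set $\Psi(\beta):=\alpha$, which defines a map $\Psi:\CI\setminus\{0_\CI\}\to\CI\setminus\{0_\CI\}$.

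It then remains to verify the required inclusion: the growth condition for $\Phi$ gives $\FD_\alpha\circ\FD_\alpha\subseteq\FD_{\Phi(\alpha)}$, while $\Phi(\alpha)\leq\beta$ together with monotonicity gives $\FD_{\Phi(\alpha)}\subseteq\FD_\beta$; chaining the two inclusions gives $\FD_{\Psi(\beta)}\circ\FD_{\Psi(\beta)}\subseteq\FD_\beta$. (In the ``distance'' language: if $\fd(x,y)\leq\Psi(\beta)$ and $\fd(y,z)\leq\Psi(\beta)$, then $\fd(x,z)\leq\Phi(\Psi(\beta))=\Phi(\alpha)\leq\beta$.) Hence $\fd$ is a pseudo uniform $\CI$-metric.

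There is essentially no obstacle here; the only mildly delicate point is ensuring the selected $\alpha$ is never $0_\CI$, so that $\Psi$ genuinely maps into $\CI\setminus\{0_\CI\}$. This is precisely why the hypothesis requires $\Phi(\CI\setminus\{0_\CI\})$ to be downward cofinal \emph{in $\CI\setminus\{0_\CI\}$} rather than merely in $\CI$, and it is the one place where I would be careful.
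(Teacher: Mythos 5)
Your proof is correct and is exactly the argument the paper has in mind (the paper omits it as ``more or less obvious''): choose for each $\beta\in\CI\setminus\{0_\CI\}$ a preimage $\alpha\in\CI\setminus\{0_\CI\}$ with $\Phi(\alpha)\leq\beta$ and set $\Psi(\beta):=\alpha$, then chain $\FD_\alpha\circ\FD_\alpha\subseteq\FD_{\Phi(\alpha)}\subseteq\FD_\beta$. Your remark about why cofinality must be taken \emph{in} $\CI\setminus\{0_\CI\}$ is also the right point of care.
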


Consequently, if $\CI$ is a upward directed set which is also a D-index set, and $\fd$ is a coarse $\CI$-metric on $X$ such that for every $\alpha\in \CI$ and $x,y,z\in X$, one has $\fd(x,z)\leq \alpha$ if $\fd(x,y)\leq \alpha$ and $\fd(y,z)\leq \alpha$, then $\fd$ is also a pseudo uniform $\CI$-metric. 
In this case, we called $\fd$ an \emph{pseduo ultra $\CI$-metric}.

\medskip

The following is an example of a pseudo ultra $\CI$-metric with $\CI$ being a totally ordered set.

\medskip

\begin{eg}\label{eg:val-ring}
Suppose that $R$ is a (unital) ring and $\Gamma$ is a totally ordered abelian group. 
Let $\Gamma^0$ be the ordered semi-group obtained by adjoining to $\Gamma$ a new element $\omega$, such that $\omega$ is greater than all elements in $\Gamma$ and that $\alpha + \beta = \omega$ when either $\alpha = \omega$ or $\beta = \omega$. 
As in \cite[Definition VI.3.1]{Bour}, a map $\nu: R\to \Gamma^0$ is called a \emph{valuation} if for any $x,y\in R$, one has 
\begin{enumerate}[\ \  (V1)]
	\item $\nu(xy) = \nu(x) + \nu(y)$;
	\item $\nu(x+y)\geq \min\{\nu(x),\nu(y) \}$;
	\item $\nu(1) = 0$;
	\item $\nu(0) = \omega$. 
\end{enumerate}
Under the reverse ordering $\leq_\mathrm{op}$, the set $\Gamma^0$ is a upward directed set with a zero (namely, $\omega$) and is also a D-index set (because it is totally ordered). 
Let us define $\fd_\nu: R\times R\to \Gamma^0$ by 
$$\fd_\nu(x,y):= \nu(x-y)\qquad (x,y\in R).$$
Obviously, Condition (V4) implies Condition (D1) in Definition \ref{defn:gen-met}(b). 
On the other hand, Conditions (V1) and (V3) implies that $\nu(-x) = \nu(x)$, and this verifies Condition (D2). 
Using Condition (V2), we know that for every $x,y,z\in R$, we have
$$\fd_\nu(x,z) = \nu(x-z)\leq_\mathrm{op} \max \{\fd_\nu(x,y), \fd_\nu(y,z) \}.$$
Consequently, $\fd_\nu$ is a pseudo ultra $\Gamma^0$-metric. 

Suppose, furthermore, that $R$ is a division ring.
Then for every $x\in R\setminus \{0\}$, we learn from Condition (V1) that 
$$0 = \nu(xx^{-1}) = \nu(x)+\nu(x^{-1}),$$
which implies that $\nu(x)\neq \omega$. 
Thus, in this case, $\big\{(x,y)\in R\times R: \fd_\nu(x,y)=\omega \big\} = \Delta_R$. 
Furthermore, since a totally ordered group can never has a greatest element, we know that $(\Gamma, \leq_\mathrm{op})$ can never has a smallest element. 
Therefore, if $R$ is a division ring, then $\fd_\nu$ is a uniform $\Gamma^0$-metric (see Remark \ref{rem:no-atom}). 
\end{eg}

\medskip

\section*{Acknowledgement}

The author is supported by National Natural Science Foundation of China (11471168) and (11871285).

\medskip

\bibliographystyle{plain}

\end{document}